\newtheorem{thm}{Theorem}[section]
\newtheorem{prop}[thm]{Proposition}
\newtheorem{lem}[thm]{Lemma}
\newtheorem{cor}[thm]{Corollary}
\theoremstyle{definition}
\theoremstyle{definition}
\theoremstyle{remark}
\numberwithin{equation}{section}
\def\G{\Gamma}
\newcommand{\R}{\mathbf{R}}  
\newcommand{\Z}{\mathbf{Z}}
\newcommand{\C}{\mathbf{C}}
\newcommand{\SL}{\mathrm{SL}_2 (\mathbf{Z})}
\newcommand{\floor}[1]{\left \lfloor #1 \right \rfloor}
\newcommand{\sgn}{\mathrm{sign}}
\newcommand\SmallMatrix[4]{{\tiny\arraycolsep=0.3\arraycolsep\ensuremath{\begin{pmatrix}#1 & #2 \\ #3 & #4\end{pmatrix}}}}
\def\XXint#1#2#3{{\setbox0=\hbox{$#1{#2#3}{\displaystyle\displaystyle\int}$ }
\vcenter{\hbox{$#2#3$ }}\kern-.6\wd0}}
\patchcmd{\@setauthors}{\MakeUppercase\@author}{}{}{}
\begin{document}

\title{Continued Fractions and Hardy Sums}

\author[A. L\"ageler]{Alessandro L\"ageler}

\maketitle

\begin{abstract}
    The classical Dedekind sums $s(d, c)$ can be represented as sums over the partial quotients of the continued fraction expansion of the rational $\frac{d}{c}$. Hardy sums, the analog integer-valued sums arising in the transformation of the logarithms of $\theta$-functions under a subgroup of the modular group, have been shown to satisfy many properties which mirror the properties of the classical Dedekind sums. The representation as sums of partial quotients has, however, been missing so far. We define non-classical continued fractions and prove that Hardy sums can be expressed as a sums of partial quotients of these continued fractions. As an application, we prove that the graph of the Hardy sums is dense in $\R \times \Z$.
\end{abstract}

\section{Introduction}

The Dedekind sums \begin{equation} \label{classdeddef}
    s(d, c) = \frac{1}{4c} \sum_{k = 1}^{\vert c \vert - 1} \cot \frac{\pi k}{c} \cot \frac{\pi k d}{c}, \; \; \;  (d, c) = 1,
\end{equation}
are ubiquitous objects in mathematics appearing in diverse fields such as physics, geometry, and topology. They are completely determined by the reciprocity law \begin{equation} \label{classreci}
    s(d, c) + s(c, d) = \frac{c^2 + d^2 + 1}{cd} - \frac{1}{4}
\end{equation}
together with the fact that $s(0, 1) = 0$ and $s(d + c, c) = s(d, c)$.

Definition (\ref{classdeddef}) of the Dedekind sums as cotangent-sums is not the only representation of these finite sums. For instance, another formula due to Hickerson \cite{hickerson} expresses $s(d, c)$ in terms of the continued fraction expansion of the rational $\frac{d}{c}$, namely \begin{equation} \label{classhickform}
    s(d, c) = - \frac{1}{4} + \frac{1}{12} \left( \frac{a + d}{c} - \sum_{k = 1}^{2n + 1} (-1)^k a_k \right), \; \; \; 0 < a < c, \; ad \equiv 1 \pmod c,
\end{equation}
where the $a_i \geq 0$ are the partial quotients of the continued fraction expansion of
\begin{equation*}\label{contfrachickform}
    \frac{d}{c} = [0; a_1, ..., a_{2n + 1}] = \frac{1}{a_1 + \frac{1}{\cdots \; + \frac{1}{a_{2n + 1}}}}.
\end{equation*}

The expression (\ref{classhickform}) was used by Hickerson to prove that the set $\{ (d / c, \; s(d, c)) : (d, c) = 1 \}$ is dense in the plane $\R \times \R$, thus proving a conjecture of Rademacher \cite{radegross}.

Dedekind sums first arose in the context of the transformation behavior of $\log \eta$, the logarithm of the Dedekind $\eta$-function $\eta(z) = e^{\frac{\pi i}{12} z} \prod_{n = 1}^\infty (1 - e^{2 \pi i n z})$, under the modular group $\SL$.\footnote{Whenever we talk about the complex logarithm, we shall mean its canonical branch, i.e. with argument in $(- \pi, \pi]$.} It transforms like a weight $\frac{1}{2}$ modular form with a multiplier system. For $A = \SmallMatrix{a}{b}{c}{d} \in \SL$ with $c > 0$, the transformation of $\log \eta(z)$ is as follows: \begin{equation} \label{logetatrans}
    \log \eta (A.z) - \log \eta(z) = \frac{1}{2} \log \left( \frac{cz + d}{i} \right) + \frac{\pi i}{12} \left( \frac{a + d}{c} - 12 \; s(d, c) \right).
\end{equation}

Giving another proof of the reciprocity law (\ref{classreci}), Hardy \cite{hardy} looked at the contour integration of trigonometric functions over a large circle. Changing trigonometric functions in the integrand led him to other finite sums, some of which exhibit reciprocity laws. In this note, we will consider two of these "Hardy sums," namely \begin{equation} \label{Sdefi}
    S(d, c) = \sum_{k = 1}^{c - 1} (-1)^{k + 1 + \floor{\frac{dk}{c}}} \; \; \; \mathrm{for} \; c > 0, \; (d, c) = 1,
\end{equation}
and \begin{equation} \label{S4defi}
    S_4(d, c) = \sum_{k = 1}^{c - 1} (-1)^{\floor{\frac{dk}{c}}} \; \; \; \mathrm{for} \; c > 0, \; (d, c) = 1.
\end{equation}

Both sums $S(d, c)$ and $S_4(d, c)$ in (\ref{Sdefi}) and (\ref{S4defi}) are integer-valued and it is easy to see that $S(-d, c) = -S(d, c)$ and $S(d + 2c, c) = S(d, c)$ and that the same identities also hold for $S_4(d, c)$.

The Hardy sums in (\ref{Sdefi}) and (\ref{S4defi}) are particularly interesting, as they arise analogously to the Dedekind sums as correction factors in the transformation law of the logarithms of $\theta$-functions under subgroups of $\SL$. These $\theta$-functions are also weight $\frac{1}{2}$ forms, but with different multiplier systems. 

More precisely, let \begin{equation} \label{thetadefi}
    \theta(z) = \sum_{n \in \Z} e^{\pi i n^2 z} = \prod_{n = 1}^\infty (1 - e(nz)) \left(1 + e\left( \left( n - 1 / 2 \right) z \right) \right)^2
\end{equation}
be the classical $\theta$-function, where, as usual, $e(z) = e^{2 \pi i z}$. Moreover, define \begin{equation} \label{theta4defi}
    \theta_4(z) = \sum_{n \in \Z} (-1)^n e^{\pi i n^2 z} = \prod_{n = 1}^\infty (1 - e(nz)) \left(1 - e\left( \left( n - 1 / 2 \right) z \right) \right)^2.
\end{equation}

The functions $\theta(z)$ and $\theta_4(z)$ exhibit modular transformations for the subgroups \begin{align}
\begin{split} \label{subgroups}
    &\G_\theta = \left\{ \SmallMatrix{a}{b}{c}{d} \in \SL : a \equiv d, \; b \equiv c \pmod 2 \right\}, \; \mathrm{resp.} \\
    &\G^0(2) = \left\{ \SmallMatrix{a}{b}{c}{d} \in \SL : b \equiv 0 \pmod 2 \right\}
\end{split}
\end{align}
instead of the full modular group $\SL$. Berndt \cite{berndt} proved that \begin{equation} \label{thetatrans}
    \log \theta (A.z) - \log \theta(z) = \frac{1}{2} \log \left( \frac{cz + d}{i} \right) + \frac{\pi i }{4} S(d, c) \; \mathrm{for} \; A = \SmallMatrix{*}{*}{c}{d} \in \G_\theta, \; c > 0,
\end{equation}
and \begin{equation} \label{theta4trans}
    \log \theta_4(A.z) - \log \theta_4(z) = \frac{1}{2} \log\left(\frac{cz + d}{i} \right) - \frac{\pi i}{4} S_4(d, c) \; \mathrm{for} \; A = \SmallMatrix{*}{*}{c}{d} \in \G^0(2), \; c > 0.
\end{equation}

Rademacher \cite{Rademacher} gave the correction factors of the transformation of $\log \theta$ and $\log \theta_4$ in terms of non-trivial linear combinations of Dedekind sums (see also Sitaramachandrarao's elementary approach \cite{spaper}). Moreover, Sczech \cite{sczech} showed by means of the Jacobi Triple Product Identity (\ref{thetadefi}) that the Hardy sums $S(d, c)$ have a representation as cotangent sums, thus resembling the formula (\ref{classdeddef}) for the classical Dedekind sums.

The Hardy sums $S(d, c)$ in (\ref{Sdefi}) make sense for $c, d$ both odd or both even, but the value of the sum will then be equal to zero. Hence, we always implicitly assume that $c + d$ is odd when talking about the sum $S(d, c)$. Similarly, will always implicitly assume that $d$ is odd in $S_4(d, c)$, i.e. that $\SmallMatrix{*}{*}{c}{d}$ lies in $\G^0(2)$. 

Meyer \cite{meyer} proved that the sets $\{ (d / c, \; S(d, c)) : (d, c) = 1, \; c + d \; \mathrm{odd} \}$ and $\{(d / c, \; S_4(d, c)) : (c, d) = 1, \; d \; \mathrm{odd}\}$ are dense in $\R \times \Z$, relying on non-trivial results of Goldberg \cite{goldberg} and Duke, Friedlander, and Iwaniec \cite{dfi}. The goal of this paper is to give another proof of Meyer's result along the lines of Hickerson's proof for the classical Dedekind sums. To this end, we first write the Hardy sums in terms of the partial quotients of a continued fraction expansion.

\begin{thm} \label{mainthm}
Let $d$ be an integer and $c > 0$ such that $(d, c) = 1$. \begin{enumerate}
    \item For $c + d$ odd, let $\frac{d}{c} = \llbracket 2c_0; 2c_1, ..., 2c_n \rrbracket = 2c_0 - \frac{1}{2c_1 - \frac{1}{\cdots - \frac{1}{2c_n}}}$ with $c_0 \in \Z$ and $c_1, ..., c_n$ non-zero integers be the negative continued fraction of $\frac{d}{c}$. The Hardy sum $S(d, c)$ takes the form 
    \begin{equation} \label{Sinthm}
        S(d, c) = - \sum_{k = 1}^n \sgn(c_k).
    \end{equation}
    \item For $d$ odd, let $\frac{d}{c} = [2a_0; a_1, 2a_2, a_3, ..., 2a_{n - 1}, a_n]$ with $a_0 \in \Z$ and $a_1, ..., a_n$ non-zero integers such that $\vert a_k \vert > 1$ for $k = 1, 3, ..., n - 2$. The Hardy sum $S_4(d, c)$ takes the form 
    \begin{equation} \label{S4inthm}
        S_4(d, c) = (a_1 + a_3 + ... + a_n) + \sum_{k = 1}^n (-1)^k \sgn(a_k).
    \end{equation}
\end{enumerate} 
\end{thm}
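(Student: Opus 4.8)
The plan is to turn the transformation laws (\ref{thetatrans}) and (\ref{theta4trans}) into additive cocycle relations for the Hardy sums on $\G_\theta$ and $\G^0(2)$, and then to read off the prescribed continued fractions as explicit factorisations of the associated matrices into the generators of these groups. Write $j(M, z) = cz + d$ for the automorphy factor of $M = \SmallMatrix{*}{*}{c}{d}$. For $A, B \in \G_\theta$ with positive lower-left entries I would apply (\ref{thetatrans}) at the points $z$ and $B.z$, and use the cocycle identity $j(AB, z) = j(A, B.z)\, j(B, z)$. Since $\log\theta$ is single-valued, the two ways of expanding $\log\theta(AB.z) - \log\theta(z)$ must agree, which forces
\[
\tfrac{\pi i}{4}\bigl(S(AB) - S(A) - S(B)\bigr) = \tfrac12\Bigl(\log\tfrac{j(A,B.z)}{i} + \log\tfrac{j(B,z)}{i} - \log\tfrac{j(AB,z)}{i}\Bigr).
\]
The right-hand side is a constant integer multiple of $\pi i$ governed entirely by which branch of the canonical logarithm is selected; expressing this branch discrepancy through the signs of the lower-left entries of $A$, $B$ and $AB$ yields a relation $S(AB) = S(A) + S(B) + \varepsilon(A,B)$ with $\varepsilon$ a $\sgn$-valued cocycle. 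The identical computation applied to (\ref{theta4trans}) produces the analogous relation for $S_4$ on $\G^0(2)$.

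Next I would exhibit the factorisations. Writing $T = \SmallMatrix{1}{1}{0}{1}$ and $S = \SmallMatrix{0}{-1}{1}{0}$, one has $\SmallMatrix{2c_k}{-1}{1}{0} = T^{2c_k} S$, and $T^2, S$ generate $\G_\theta$. Hence the even negative continued fraction $\frac{d}{c} = \llbracket 2c_0; 2c_1, \ldots, 2c_n \rrbracket$ is exactly the assertion that $\prod_{k=0}^n \SmallMatrix{2c_k}{-1}{1}{0} = \prod_{k=0}^n T^{2c_k} S$ is a matrix of $\G_\theta$ whose first column is $(d, c)^{\mathrm t}$ up to sign, so its lower-left entry is $c$. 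For part (ii) I would factor instead into the generators $T^2$ and $V = \SmallMatrix{1}{0}{1}{1}$ of $\G^0(2)$, the even-indexed partial quotients $2a_{2j}$ contributing powers of $T^2$ and the odd-indexed $a_{2j+1}$ contributing powers of $V$; the hypotheses $\vert a_k \vert > 1$ at the designated indices ensure that this is the genuine factorisation coming from the expansion rather than a degenerate one.

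Now I iterate the cocycle. Since $S$ annihilates all the generators (one has $S(0,1)=0$ for $S$, while translations by $T^2$ contribute nothing), the product collapses to a telescoping sum of the corrections $\varepsilon$, and bookkeeping of signs should collapse it to $-\sum_{k=1}^n \sgn(c_k)$. For $S_4$ the generator $V^m = \SmallMatrix{1}{0}{m}{1}$ is no longer annihilated, since $S_4(V^m) = S_4(1,m) = m-1$, so the odd-indexed blocks produce the linear term $a_1 + a_3 + \cdots + a_n$ (the constant shifts merging into the sign corrections), while the accumulated cocycle terms assemble into $\sum_{k=1}^n (-1)^k \sgn(a_k)$. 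One final bridge is needed: the transformation laws read the Hardy sum off the \emph{lower-right} entry $d^{*}$ of the product matrix, which satisfies $d d^{*} \equiv 1$; identifying $S(d^{*}, c)$ with $S(d, c)$ uses the periodicity $S(d+2c, c) = S(d,c)$ together with the inversion symmetry of the Hardy sums.

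The main obstacle is the first step: pinning down the branch-discrepancy integer $\varepsilon(A,B)$ exactly, i.e. tracking the canonical branch of $\log\frac{cz+d}{i}$ through the product and establishing its precise dependence on the signs of the lower-left entries. Once $\varepsilon$ is in hand, the remaining work — verifying that the telescoped corrections reproduce the stated sign sums with the correct index ranges and boundary terms, and confirming the linear $V^m$-contribution in part (ii) — is careful but routine. I would also keep the induction on continued-fraction length in reserve as an elementary alternative, feeding off the Hardy-sum reciprocity and periodicity in place of the cocycle, should the branch analysis prove unwieldy.
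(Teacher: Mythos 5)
Your proposal is correct in outline, but it takes a genuinely different route from the paper. You derive a Rademacher-style two-cocycle relation for the Hardy sums directly from Berndt's laws (\ref{thetatrans}) and (\ref{theta4trans}) --- $S(AB)=S(A)+S(B)+\varepsilon(A,B)$ with $\varepsilon$ the branch discrepancy of the canonical logarithm --- and then telescope along the factorisation of the matrix into generators, using $S(0,1)=0$, the triviality of translations, and $S_4(1,m)=m-1$. The paper never forms this two-cocycle: it identifies the one-cocycle $A\mapsto\log\theta_4(A.z)-\log\theta_4(z)$ with the period integral of $E_2-L$ (resp.\ $E_2-R$) via the one-dimensionality of the space of homomorphisms $\G^0(2)\to(\C,+)$ (Lemma \ref{grouplem}, Proposition \ref{thetaeis}), computes the $E_2$-integral along the word with all the branch bookkeeping (Propositions \ref{eiscontfrac} and \ref{comptermlem}, proved in the appendix), and lets the $L$-integral telescope by the homomorphism property, which is what cleanly produces the linear term $a_1+a_3+\cdots+a_n$. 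Your route buys elementarity --- no Eisenstein series, no cohomological uniqueness lemma --- and, importantly, it does not need a reciprocity law, so it covers part (2); this also explains why your fallback plan (induction on length via reciprocity) cannot substitute for it: (\ref{sreci}) has no analogue for $S_4$, as the paper stresses. What the paper's route buys is a structural separation of the linear and sign contributions from the outset, with the hard branch analysis confined to a single statement about $E_2$ on $\SL$.

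Two points you defer are where the real content lies. First, $\varepsilon(A,B)$: when $c_A,c_B,c_{AB}>0$, all three terms $\log\frac{j(\cdot,\cdot)}{i}$ have imaginary part in $(-\pi/2,\pi/2)$, which pins the discrepancy to $-\pi i/2$ and gives the constant $\varepsilon=-1$; the $\sgn$-dependence enters only through the degenerate cases (zero or negative lower-left entries), and your partial products genuinely pass through such matrices ($T^{2a_0}$ has $c=0$, and the intermediate $c_k$ change sign when partial quotients do), so those conventions must be worked out, not waved at. Second, the bookkeeping converting the accumulated $\varepsilon$'s --- which depend on the signs of the lower-left entries of the partial products --- into the stated sums $\sum_k(-1)^k\sgn(a_k)$ is precisely the induction of Proposition \ref{comptermlem} ($\sgn(c_k)=\sgn(a_1\cdots a_k)$, $0<\vert d_k\vert\le\vert c_k\vert$), and that is where the hypotheses $\vert a_k\vert>1$ are actually used. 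They are not there to single out a ``genuine'' factorisation, as you suggest: admissible expansions are non-unique anyway (e.g.\ $3/7=[0;2,2,1]=[0;3,-2,2]$, and the formula holds for both); they are what makes the sign propagation work. So your plan goes through, but the half you call routine is the substance of the proof.
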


\textbf{Remark:} The continued fraction expansion $\llbracket 2c_0; 2c_1, ..., 2c_n \rrbracket$ is unique; the expansion $[2a_0; a_1, 2a_2, a_3, ..., a_n]$ from the second part of Theorem \ref{mainthm}, however, is not. The fact that $\frac{d}{c}$ has a continued fraction expansions as claimed in Theorem \ref{mainthm} will be shown in section \ref{contfracsec}. 

\smallskip


Analogously to Hickerson's proof for the density of $\{ (d / c, \; s(d, c)) : (d, c) = 1 \}$ in the plane $\R \times \R$, Theorem \ref{mainthm} will be used to prove both the density of $(d / c, \; S(d, c))$ and $(d / c, \; S_4(d, c))$ in $\R \times \Z$, hence giving a new proof of Meyer's result. 

\begin{thm}[Meyer] \label{meyerthm}
The sets $\{ (d / c, \; S(d, c)) : (d, c) = 1, \; c > 0, \; d + c \; \mathrm{odd} \}$ and $\{ (d / c, \; S_4(d, c)) : (d, c) = 1, \; c > 0, \; d \; \mathrm{odd}\}$ are dense in $\R \times \Z$. 
\end{thm}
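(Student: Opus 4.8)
The plan is to prove density separately for each of the two Hardy sums, using the continued-fraction formulas from Theorem \ref{mainthm} to engineer fractions $\frac{d}{c}$ whose value of $\frac{d}{c}$ is close to a prescribed real number $x$ while the sum takes a prescribed integer value $m$. The underlying principle in Hickerson's argument for the classical case is that partial quotients contribute additively to the sum but their effect on the value of the continued fraction can be made negligible by inserting large partial quotients near the tail end of the expansion. I would mirror this, but with the extra care needed to respect the parity constraints (namely $c + d$ odd for $S$, and $d$ odd for $S_4$) and the special shape of the admissible expansions.

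First I would treat $S(d, c)$. Given a target $(x, m) \in \R \times \Z$ and an $\ep > 0$, I would fix a negative continued fraction $\llbracket 2c_0; 2c_1, \dots, 2c_r \rrbracket$ approximating $x$ to within $\ep/2$; this controls the location on the real axis. Then, using formula (\ref{Sinthm}) which states $S(d, c) = -\sum_{k=1}^n \sgn(c_k)$, I would append further even partial quotients $2c_{r+1}, \dots, 2c_n$ chosen so that (i) their signs sum to $-m$ minus the contribution already present, thereby fixing the integer coordinate exactly, and (ii) their absolute values are large enough that appending them perturbs the value of the continued fraction by less than $\ep/2$. Because each appended quotient changes $S$ by exactly $\pm 1$ and because one can always balance a $+1$ against a $-1$ by inserting a canceling pair of large quotients, I can adjust the sum to any integer $m$ without destroying the approximation of $x$. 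The parity condition $c+d$ odd is automatically encoded by working with the negative continued fraction whose partial quotients are all even, so admissibility is preserved throughout.

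For $S_4(d, c)$ the strategy is the same in spirit but driven by formula (\ref{S4inthm}), where $S_4(d, c) = (a_1 + a_3 + \dots + a_n) + \sum_{k=1}^n (-1)^k \sgn(a_k)$. Here the odd-indexed partial quotients contribute their full value to the sum, not merely their sign, so I have even more flexibility: a single large odd-indexed quotient $a_{2j+1}$ shifts $S_4$ by its full magnitude. I would again first approximate $x$ by a suitable truncation of the required alternating-shape expansion $[2a_0; a_1, 2a_2, \dots, a_n]$, then manipulate the tail to set the integer coordinate. The delicate point is to respect the constraint $\vert a_k \vert > 1$ for odd $k < n$ and to keep the even/odd index pattern intact while appending terms; I would handle this by always appending blocks of the correct alternating parity. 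One must also verify that the resulting expansion genuinely corresponds to a reduced fraction with $d$ odd.

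The main obstacle I anticipate is the simultaneous control of the two coordinates: showing rigorously that the perturbation to $\frac{d}{c}$ caused by the sum-adjusting tail can be made uniformly small. This requires a quantitative estimate on how much the value of a (negative or ordinary) continued fraction changes when one appends partial quotients of large absolute value. The standard tool is the recursion for the convergents $p_k, q_k$ together with the identity $\frac{d}{c} - \frac{p_k}{q_k} = O(1/q_k^2)$ and the observation that inserting large quotients makes the denominators $q_k$ grow rapidly; I would need the analogous bound for negative continued fractions in part (1) and for the mixed-parity expansion in part (2). Once this estimate is in hand, the two density claims follow by choosing the approximating truncation first and then the adjusting tail, and I expect no further serious difficulty beyond bookkeeping of signs and parities.
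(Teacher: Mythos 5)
Your treatment of $S(d,c)$ is essentially identical to the paper's: approximate $x$ by a $\G_\theta$-type expansion via Proposition \ref{apprprop}, then append large even quotients whose signs are chosen to drive formula (\ref{Sinthm}) to the target $m$, with Lemma \ref{lemconv} guaranteeing the perturbation of the value is negligible. The paper appends $\vert M \vert$ quotients all of sign $\sgn(M)$, each changing $S$ by $-\sgn(M)$ and the value by less than $\ep/(2\vert M\vert)$; your ``canceling pairs'' variant is the same idea.

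For $S_4(d,c)$, however, your route genuinely differs from the paper's, and this is where your plan has real debts to pay. The paper never manipulates $\G^0(2)$-type expansions directly: all of its quantitative machinery (Lemmas \ref{lemrel}, \ref{lemconv}, Proposition \ref{apprprop}) is proved only for $\G_\theta$-type expansions. Instead it exploits even denominators, where $\frac{d}{c} = \llbracket 2c_1, \dots, 2c_n \rrbracket$ is simultaneously of both types, to prove the stronger joint density of $(d/c,\; S+S_4,\; S_4)$ in $\R \times 2\Z \times (2\Z+1)$ (Proposition \ref{mainthm2prop}): there, appending one large even-index quotient followed by the quotient $2M$ at an odd index shifts $S_4$ by $-2M$ while holding $S$ fixed, by Corollary \ref{maincor}. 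Since $S_4$ is forced to be \emph{odd} whenever $c$ is even, the even targets $m$ are then reached by a separate conversion trick (Proposition \ref{meyerthmprop}): realize $S_4 = m+1$ on an even denominator, append a large quotient and a final $-1$, and read $\llbracket 2c_1, \dots, 2c_{n+1}, -1 \rrbracket = [-2c_1, 2c_2, \dots, 2c_{n+1}, 1]$ as a $\G^0(2)$-type expansion with odd denominator, which lowers $S_4$ by exactly $1$. A side benefit of this detour is that Proposition \ref{mainthm2prop} is precisely what proves Theorem \ref{meyerthm2}, which your direct approach would not give.

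Concretely, your direct approach must supply three things the paper's route avoids. First, growth and perturbation estimates for the mixed expansions $[2a_0; a_1, 2a_2, \dots, a_n]$: note that exactly in the case you need most, even $m$, the final quotient is odd, the expansion is \emph{not} convertible to $\G_\theta$-type, and so Lemmas \ref{lemrel} and \ref{lemconv} are unavailable; you would have to prove analogues (the induction in Proposition \ref{comptermlem}, giving $0 < \vert d_k \vert \le \vert c_k \vert$ and $\vert c_k \vert$ non-decreasing, is a starting point but not by itself the needed strict growth). Second, preservation of the hypothesis $\vert a_k \vert > 1$ for \emph{interior} odd-indexed quotients: if your approximant ends in $a_r = \pm 1$ (e.g.\ $3/7 = [0; 2, 2, 1]$), then appending a block makes $a_r$ interior and formula (\ref{S4inthm}) no longer applies to the extended expansion; you must re-expand using non-uniqueness (e.g.\ $3/7 = [0; 3, -2, 2]$) or otherwise exclude this case. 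Third, appending must always be done two quotients at a time so the length stays odd, as Lemma \ref{categorizing} forces for odd numerators. None of these seems fatal, but until they are carried out your argument for $S_4$ is an outline rather than a proof, whereas the paper's two-step $\G_\theta$-based argument closes them by construction.
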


Note that $S(d, c) + S_4(d, c)$ makes sense if we restrict the coprime integers $d, c$ to be such that $d$ is odd and $c$ is even (hence, the matrix $\SmallMatrix{*}{*}{c}{d} \in \SL$ lies both in $\G_\theta$ and $\G^0(2)$). The continued fraction expansion of $\frac{d}{c}$ then can be written as $$\frac{d}{c} = \llbracket 2c_0, 2c_1, ..., 2c_n \rrbracket = [2c_0; -2c_1, 2c_2, ..., -2c_n], \; \; n \geq 1 \; \mathrm{odd},$$ with both continued fraction expansions satisfying the assumptions of both parts of Theorem \ref{mainthm}. We thus get the following easy corollary. 

\begin{cor} \label{maincor}
Let $c > 0$ be an even integer and $d$ an odd integer, coprime to $c$. Let $c_0, c_1, ..., c_n$ be integers with $c_1, ..., c_n$ non-zero and $n \geq 1$ odd such that $\frac{d}{c} = \llbracket 2c_0, 2c_1, ..., 2c_n\rrbracket$. We have 
\begin{equation} \label{SS4incor}
S(d, c) + S_4(d, c) = 2 \sum_{\substack{k = 1, \\ k \; \mathrm{odd}}}^{c - 1} (-1)^{\floor{\frac{dk}{c}}} = - 2 \cdot (c_1 + c_3 + ... + c_n).
\end{equation}
\end{cor}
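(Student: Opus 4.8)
The plan is to reduce the statement to an application of Theorem \ref{mainthm} together with a direct manipulation of the defining sums, carried out in three steps.

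First, I would establish the middle equality in (\ref{SS4incor}) straight from the definitions (\ref{Sdefi}) and (\ref{S4defi}). Combining the two summands gives
\[
S(d,c) + S_4(d,c) = \sum_{k=1}^{c-1} (-1)^{\floor{\frac{dk}{c}}}\bigl((-1)^{k+1} + 1\bigr),
\]
and since $(-1)^{k+1} + 1$ equals $2$ for odd $k$ and $0$ for even $k$, only the odd indices survive, each contributing with weight $2$. This yields the middle expression of (\ref{SS4incor}) with no continued fractions involved.

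Second, I would record the dictionary between the two continued fraction expansions of $\frac{d}{c}$ supplied in the paragraph preceding the corollary, namely $\llbracket 2c_0, 2c_1, \ldots, 2c_n \rrbracket = [2c_0; -2c_1, 2c_2, \ldots, -2c_n]$ with $n$ odd. Writing the second expansion in the normalized form $[2a_0; a_1, 2a_2, a_3, \ldots, 2a_{n-1}, a_n]$ required by part (2) of Theorem \ref{mainthm}, one reads off $a_k = -2c_k$ for odd $k$ and $a_k = c_k$ for even $k \geq 2$; the constraint $\vert a_k \vert > 1$ at odd indices holds automatically because $a_k = -2c_k$ with $c_k$ a nonzero integer. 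I would flag that this is exactly the compatibility with the hypotheses of both parts of Theorem \ref{mainthm} asserted just before the corollary.

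Third, I would feed these expansions into Theorem \ref{mainthm}. Part (1) applied to the negative continued fraction gives $S(d,c) = -\sum_{k=1}^n \sgn(c_k)$. For $S_4(d,c)$, part (2) applied to the second expansion gives
\[
S_4(d,c) = (a_1 + a_3 + \cdots + a_n) + \sum_{k=1}^n (-1)^k \sgn(a_k).
\]
Using the dictionary, the first bracket equals $-2(c_1 + c_3 + \cdots + c_n)$, while the sign sum collapses term-by-term to $\sum_{k=1}^n \sgn(c_k)$: at odd $k$ one has $(-1)^k \sgn(a_k) = (-1)\sgn(-2c_k) = \sgn(c_k)$, and at even $k$ one has $(-1)^k \sgn(a_k) = \sgn(c_k)$. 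Adding the two formulas then cancels the two sign sums and leaves $-2(c_1 + c_3 + \cdots + c_n)$, the right-hand side of (\ref{SS4incor}). I do not anticipate any serious obstacle, since every nontrivial ingredient has already been proven; the only point requiring care is the index bookkeeping in the second step — correctly matching the entries of the two expansions and confirming the normalization hypotheses of part (2) — after which the cancellation of the sign sums is purely mechanical.
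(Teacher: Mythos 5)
Your proposal is correct and takes essentially the same route as the paper: the middle equality is the same direct parity computation from the definitions (\ref{Sdefi}) and (\ref{S4defi}), and the outer equality is obtained, exactly as the paper intends, by applying part (1) of Theorem \ref{mainthm} to $\llbracket 2c_0; 2c_1, \ldots, 2c_n \rrbracket$ and part (2) to $[2c_0; -2c_1, 2c_2, \ldots, -2c_n]$ and cancelling the two sign sums. Your dictionary $a_k = -2c_k$ ($k$ odd), $a_k = c_k$ ($k$ even) and the verification of the hypothesis $\vert a_k \vert > 1$ at odd indices match the computation the paper itself carries out explicitly inside the proof of Proposition \ref{mainthm2prop}.
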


As an application of (\ref{SS4incor}), we can prove the following density theorem, which appears to be new.

\begin{thm} \label{meyerthm2}
The set $\{(d / c, \; S(d, c) + S_4(d, c)) : (d, c) = 1, \; c > 0 \; \mathrm{even}, \; d \; \mathrm{odd} \}$ is dense in $\R \times 2\Z$. 
\end{thm}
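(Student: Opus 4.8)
The plan is to reduce the statement to Corollary \ref{maincor} and then realise any prescribed value of the right-hand side of (\ref{SS4incor}) by a suitable negative continued fraction, while simultaneously controlling the approximation of the first coordinate, much as Hickerson does for the classical sums. Fix a target $(x, 2m) \in \R \times 2\Z$ and $\varepsilon > 0$; I must produce coprime integers $c > 0$ even and $d$ odd with $\vert d/c - x \vert < \varepsilon$ and $S(d,c) + S_4(d,c) = 2m$. By Corollary \ref{maincor}, if $\frac{d}{c} = \llbracket 2c_0; 2c_1, \ldots, 2c_n \rrbracket$ with $n$ odd, then $S(d,c) + S_4(d,c) = -2(c_1 + c_3 + \cdots + c_n)$, so it suffices to build such an expansion with $\sum_{k \; \mathrm{odd}} c_k = -m$ whose value lies within $\varepsilon$ of $x$. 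Recall from section \ref{contfracsec} that the expansions of this shape correspond exactly to rationals with $d$ odd and $c$ even, that such an expansion is unique, and that the convergents are automatically in lowest terms; in particular the digits $c_k$ are intrinsic to $d/c$, so Corollary \ref{maincor} applies to whatever expansion I construct (and one may take the representative with $c > 0$ without changing the digits).

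First I would fix a negative continued fraction $\llbracket 2c_0; 2c_1, \ldots, 2c_{n_0} \rrbracket$ with $n_0$ odd whose value $p_{n_0}/q_{n_0}$ lies within $\varepsilon/2$ of $x$; this is possible since the fractions $d/c$ with $d$ odd and $c$ even are dense in $\R$. Write $\Sigma_0 = \sum_{k \; \mathrm{odd}, \, k \le n_0} c_k$ for the current odd-indexed digit sum and set $T = -m - \Sigma_0$. The idea is to append two further digits, keeping the length odd, so as to correct the digit sum to $-m$ without moving the value by more than $\varepsilon/2$: assuming $T \neq 0$, append $2c_{n_0 + 1}$ and $2c_{n_0 + 2}$ with $c_{n_0 + 2} = T$ (position $n_0 + 2$ being odd) and $c_{n_0 + 1}$ a large positive integer to be chosen below, so that $2c_{n_0+1}$ is a nonzero even buffer entry not contributing to the digit sum. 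The new odd-indexed sum is then $\Sigma_0 + T = -m$, as required; in the degenerate case $T = 0$ one instead appends four digits and splits the correction as $1 + (-1)$ across the two new odd-indexed positions.

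It remains to check that the value does not move too far. Using the relation $p_k q_{k-1} - p_{k-1} q_k = -1$ for the convergents of a negative continued fraction, consecutive convergents satisfy $\vert p_k/q_k - p_{k-1}/q_{k-1} \vert = 1/\vert q_k q_{k-1} \vert$. Writing $P/Q$ and $P'/Q'$ for the convergents after appending one, respectively two, digits, the triangle inequality gives
\[
    \left\vert \frac{P'}{Q'} - \frac{p_{n_0}}{q_{n_0}} \right\vert \le \frac{1}{\vert Q' Q \vert} + \frac{1}{\vert Q \, q_{n_0} \vert},
\]
and since $Q = 2c_{n_0+1} q_{n_0} - q_{n_0 - 1}$ grows without bound as $c_{n_0 + 1} \to \infty$, while $\vert Q' \vert \ge \vert Q \vert$ because $c_{n_0 + 2} = T \neq 0$, both terms can be made $< \varepsilon/4$ by taking $c_{n_0 + 1}$ large. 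Hence $\vert d/c - x \vert < \varepsilon/2 + \varepsilon/2 = \varepsilon$, which completes the construction.

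The one genuine point requiring care is this last estimate: forcing the odd-indexed digit $c_{n_0+2}$ to the prescribed and possibly large value $T$ while keeping $d/c$ near $x$. The resolution is exactly that the even-indexed buffer digit $c_{n_0+1}$, which does not enter the digit sum of (\ref{SS4incor}), may be chosen arbitrarily large, pushing all the appended convergents within $O(1/q_{n_0}^2)$ of $p_{n_0}/q_{n_0}$; the metric approximation and the arithmetic constraint are thereby decoupled. The remaining verifications — that the length stays odd, that all appended entries are nonzero, that $c$ may be taken positive, and the $T = 0$ bookkeeping — are routine, and since $-2(c_1 + c_3 + \cdots + c_n)$ is always even the image is precisely $\R \times 2\Z$.
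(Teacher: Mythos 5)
Your proof is correct, and it reaches Theorem \ref{meyerthm2} by a genuinely leaner route than the paper does. The paper never proves the two-component statement directly: it proves the stronger Proposition \ref{mainthm2prop}, that $(d/c,\; S(d,c)+S_4(d,c),\; S_4(d,c))$ is dense in $\R \times 2\Z \times (2\Z+1)$, which forces it to control $S$ and $S_4$ separately. Concretely, the paper first invokes the density argument of section \ref{densityS} to find $\llbracket 2c_1, \ldots, 2c_n \rrbracket$ near $x$ with the prescribed value $S(p_n,q_n) = m_1 - m_2$, and then appends the pair $-\sgn(M)2c_{n+1},\, 2M$, the sign of the buffer digit being chosen so that the two appended digits cancel in $S = -\sum_k \sgn(c_k)$; thus $S+S_4$ is corrected by $-2M$ while $S$ stays fixed. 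Your argument needs neither the section \ref{densityS} input nor that sign bookkeeping: since you only target the sum $S + S_4 = -2(c_1 + c_3 + \cdots + c_n)$ of Corollary \ref{maincor}, any rational with odd numerator and even denominator near $x$ serves as a starting point (its $\G_\theta$-type expansion automatically has odd length by Lemma \ref{categorizing}), and your appended pair --- large even-indexed buffer, then the correcting odd-indexed digit $2T$ --- is exactly the paper's mechanism, with the same error estimate via Lemmas \ref{lemrel} and \ref{lemconv}. What your version gives up is reusability: the three-component Proposition \ref{mainthm2prop} is also the engine behind Proposition \ref{meyerthmprop}, the density of $(d/c, S_4(d,c))$ in $\R \times \Z$, i.e. half of Theorem \ref{meyerthm}, and a sum-only argument like yours cannot deliver that. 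Two trivial points: the determinant is $p_k q_{k-1} - p_{k-1} q_k = +1$, not $-1$ (immaterial, since only its absolute value enters your estimate); and when $T = 0$ the initial approximant already satisfies both requirements, so the four-digit patch is unnecessary --- though harmless as written.
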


Hickerson proved the representation (\ref{classhickform}) of the Dedekind sums $s(d, c)$ using the reciprocity law (\ref{classreci}). Given (\ref{logetatrans}), the reciprocity law can easily be proved by comparing the transformations of $\log \eta$ under $A = \SmallMatrix{*}{*}{c}{d}$ and $AS = \SmallMatrix{*}{*}{d}{-c}$, where $S = \SmallMatrix{0}{-1}{1}{0}$ is the inversion (see, for instance, Grosswald and Rademacher's monograph \cite{radegross}). 

Since $\SmallMatrix{0}{-1}{1}{0} \in \G_\theta$, there is a reciprocity law similar to (\ref{classreci}) for $S(d, c)$ as well, namely \begin{equation} \label{sreci}
    S(d, c) + S(c, d) = \sgn(cd), \; (c, d) = 1, \; c + d \; \mathrm{odd},
\end{equation}
where we set $S(d, - c) = - S(d, c)$ (i.e. we allow negative $c$).

However, since $\SmallMatrix{0}{-1}{1}{0} \notin \G^0(2)$, we should not expect a reciprocity law for $S_4(d, c)$. Formulas that resemble reciprocity laws were given by Meyer \cite{meyer2}, but expressing the right hand side of the reciprocity law by $-1 + S_4(c^2, cd + 1)$. Another reciprocity-type formula for $S_4(d, c)$ can be given by using Theorem \ref{mainthm}, namely \begin{equation}\label{s4reci}
    S_4(d, c) + S_4(c, d) = 2a_0 + 2a_1 + ... + 2a_{n - 1} + a_n - 1,
\end{equation}
where $c > d > 0$ are odd, coprime integers and $\frac{c}{d} = [2a_0; 2a_1, ..., 2a_{n - 1}, a_n]$ with $n \geq 1$ being odd (there is always a unique continued fraction expansion of that kind with the given conditions on $c, d$). However, formula (\ref{s4reci}) is a somewhat unsatisfying reciprocity law, as the right hand side does not involve the integers $c, d$ directly.

The first part of Theorem \ref{mainthm} can be proved by elementary means using the reciprocity formula (\ref{sreci}). As there is at the moment no reciprocity formula for $S_4(d, c)$, an elementary proof of the second part of Theorem \ref{mainthm} seems out of reach.

Instead, we shall use a different approach which starts with the well-known formula \begin{equation} \label{etaeis}
    \log \eta (A.z) - \log \eta(z) = \frac{\pi i}{12} \int_z^{A.z} E_2(w) \; dw, \; A \in \SL,
\end{equation}
where $E_2$ is the Eisenstein series of weight $2$. The reciprocity formula (\ref{classreci}) can thus be interpreted as a consequence of the transformation behavior of $E_2$ under $S$. More importantly, formula (\ref{etaeis}) may be used to give another proof of Hickerson's formula (\ref{classhickform}); see Proposition \ref{eiscontfrac}. There are analogs of (\ref{etaeis}) for both $\log \theta$ and $\log \theta_4$, which allow us to represent their associated Hardy sums in terms of partial quotients.

Since the methods we will use to prove our results are similar for $\log \theta$ and $\log \theta_4$, and, as we said before, the claims for $\log \theta$ can be proved by elementary means as well, we will focus more on $\log \theta_4$, only indicating the proofs for $\log \theta$. 

The rest of the paper is organized as follows: In section \ref{contfracsec}, we will show the necessary properties of continued fractions and their relations to the subgroups $\G_\theta$ and $\G^0(2)$. In section \ref{modformssec}, we use the theory of modular forms to prove the representation of $S(d, c)$ and $S_4(d, c)$ in terms of the continued fraction expansion of $\frac{d}{c}$. In section \ref{densitysec}, we prove Theorems \ref{meyerthm} and \ref{meyerthm2} by means of Theorem \ref{mainthm}. Finally, in section \ref{numericssec}, we give a table of numerical values for $S(d, c)$ and $S_4(d, c)$ with $1 \leq d < c \leq 10$ and the corresponding continued fraction expansions of $\frac{d}{c}$.

\section{Continued Fractions for Subgroups of the Modular Group} \label{contfracsec}

In this section, we prove the facts about continued fractions that we will use later in our proofs. 

A continued fraction is defined to be \begin{equation} \label{contfracdef}
    [a_0; a_1, ..., a_n] = a_0 + \frac{1}{a_1 + \frac{1}{a_2 + \frac{1}{a_3 + \frac{1}{\cdots + \frac{1}{a_n}}}}}
\end{equation}
with $n \geq 0$, $a_0 \in \Z$ and $a_1, ..., a_n$ a sequence of non-zero integers. The integers $a_1, ..., a_n$ are called the partial quotients of the continued fraction. Contrary to convention, we do \textit{not} require $a_k \geq 1$ for $k \geq 1$. Even though it is likely that the continued fractions we consider have already been studied before, the literature on continued fractions is too vast to tell whether this is indeed the case. We thus include all the details of the proof for the convenience of the reader. See \cite{contfrac1}, \cite{contfrac2} for some references on classical continued fractions.

Let $\frac{a}{c}$ be a rational number with $(a, c) = 1$ and let $b, d \in \Z$ be such that $ad - bc = 1$. Continued fraction expansions (c.f.e.) of rational numbers are given by the representation of the matrix $\SmallMatrix{a}{b}{c}{d}$ in the modular group $\SL$ in terms of its generators $T = \SmallMatrix{1}{1}{0}{1}$ and $S = \SmallMatrix{0}{-1}{1}{0}$ or $T$ and $V = TST = \SmallMatrix{1}{0}{1}{1}$. 

Namely, let us write $\SmallMatrix{a}{b}{c}{d} = T^{a_0} V^{a_1} T^{a_2} \cdots V^{a_n} \in \SL$ with $a_i \in \Z$ for $i = 0, ..., n$. We can then read off the c.f.e. of $\frac{a}{c}$ by acting on the cusp $\infty$ by M\"obius transformations: \begin{equation*} 
    \frac{a}{c} = \SmallMatrix{a}{b}{c}{d}.\infty = T^{a_0} V^{a_1} T^{a_2} \cdots V^{a_n}.\infty = a_0 + \frac{1}{a_1 + \frac{1}{a_2 + \frac{1}{... + \frac{1}{a_n}}}} = [a_0; a_1, ..., a_n].
\end{equation*}
Note that since $T.\infty = \infty$, we may write $\SmallMatrix{a}{*}{c}{*}$ for the matrix in (\ref{contfracmatrix}), i.e. the c.f.e. of $\frac{a}{c}$ is independent of multiplication of $\SmallMatrix{a}{b}{c}{d}$ by $T$ on the right. 

We hence showed that every matrix representation in $\SL$ by the generators $T, V$ gives rise to a continued fraction $[a_0; a_1, ..., a_n]$ with $n \geq 0$ odd and vice versa.\footnote{In fact, any continued fraction of the form $\frac{a}{c} = [a_0; a_1, ..., a_{2n'}]$ can be written as $[a_0; a_1, ..., a_{2n'} - 1, 1]$ if $a_{2n'} \neq 1$ or as $[a_0; a_1, ..., a_{2n' - 1} + 1]$ if $a_{2n'} = 1$, so \textit{any} c.f.e. gives a representation of $\SmallMatrix{a}{*}{c}{*}$ in terms of $T, V$, which ends with a power of $V$ (independently of the parity of its length).} 

We may argue similarly for the representation $\SmallMatrix{a}{b}{c}{d} = \pm T^{c_0} S T^{c_1} \cdots T^{c_m} S \in \SL$ with $c_0, c_1, ..., c_m \in \Z$ in terms of the generators $T$ and $S$. The negative continued fraction of $\frac{a}{c}$ can again be obtained by acting on the cusp $\infty$: \begin{equation}\label{contfracmatrix}
\frac{a}{c} = \SmallMatrix{a}{b}{c}{d}.\infty = \pm T^{c_0} S T^{c_1} \cdots T^{c_m} S.\infty = c_0 - \frac{1}{c_1 - \frac{1}{c_2 - \frac{1}{... - \frac{1}{c_m}}}} = \llbracket c_0; c_1, ..., c_m \rrbracket.
\end{equation}

Both the representation of a matrix in the modular group in terms of its generators and the c.f.e. of a rational number arise from Euclidean division. The algorithm to find the coefficients of a (positive) c.f.e. goes as follows: Let $(d, c) = 1$ (to not confuse the resulting rationals in the iterations of the algorithm with the partial quotients, we shall write $\frac{d}{c}$ for the rational number instead of $\frac{a}{c}$ now). Pick $a_0 \in \Z$ so that $\frac{d_0}{c_0} = \frac{d}{c} - a_0 \in [0, 1)$ with $(d_0, c_0) = 1$. If $d_0 = 0$, we are done. Otherwise, we have $\frac{c_0}{d_0} > 1$. Pick $a_1 \geq 1$ an integer such that $\frac{d_1}{c_1} = \frac{d_0}{c_0} - a_1 \in [0, 1)$. Continue this process until one of the fractions $\frac{d_k}{c_k}$ is equal to zero. This algorithm can also be used to find fast rational approximations of irrational numbers. 

The rationals $\frac{d_i}{c_i}$ in the algorithm are positive and the partial quotients $a_i$ are the integer parts of $\frac{c_i}{d_i}$. As such the partial quotients are positive integers. We will see shortly that this will no longer be the case in the algorithms in sections \ref{gthetasec} and \ref{g2sec}. This stems from the fact that the subgroups $\G_\theta$ and $\G^0(2)$ defined in (\ref{subgroups}) are generated by $T^2$ and $S$ resp. $T^2$ and $V$, which will force some or all of the partial quotients to be even integers. The iterations $\frac{d_i}{c_i}$ cannot be taken to be positive then, but only in $(-1, 1)$. Thus, we need to allow negative partial quotients.

However, the c.f.e. of $\G_\theta$- and $\G^0(2)$-type approximate irrational numbers as their $\SL$-counterparts do.


\subsection{$\Gamma_\theta$-type c.f.e.} \label{gthetasec}

Let $c_1, c_2, ...$ be a sequence of non-zero integers and $c_0 \in \Z$. We denote the $k$-th partial fraction of the sequence by $\frac{p_k}{q_k} = \llbracket 2c_0; 2c_1, ..., 2c_k \rrbracket$ with $(p_k, q_k) = 1$. As in (\ref{contfracmatrix}), $p_k$ and $q_k$ may be viewed as the entries of a matrix by $$\frac{p_k}{q_k} = (T^{2c_0} S T^{2c_1} S T^{2c_2} S \cdots T^{2c_k}S).\infty = \SmallMatrix{p_k}{*}{q_k}{*}.\infty.$$ 

Since $\G_\theta$ is generated by $T^2$ and $S$, the matrix $\SmallMatrix{p_k}{*}{q_k}{*} = T^{2c_0} S T^{2c_1} S T^{2c_2} S \cdots T^{2c_k}S$ lies in $\G_\theta$. For this reason we shall call a c.f.e. of the form $\llbracket 2c_0; 2 c_1, ..., 2c_k \rrbracket$ a $\G_\theta$-type c.f.e. For the c.f.e. of $\G_\theta$-type, we shall usually write $\llbracket 2c_1, 2c_2, ..., 2c_n \rrbracket = \llbracket 0; 2c_1, 2c_2, ..., 2c_n \rrbracket$ in the case $c_0 = 0$, dropping the semicolon. 

To find the other two entries of $\SmallMatrix{p_k}{*}{q_k}{*}$, we let the matrix act on the point $0$: $$\SmallMatrix{p_k}{*}{q_k}{*}.0 = (T^{2c_0} S T^{2c_1} S \cdots T^{2c_{k - 1}}S T^{2c_k} S).0 = (S T^{2c_1} S \cdots T^{2c_{k - 1}}S).\infty = \frac{p_{k - 1}}{q_{k - 1}}.$$

Hence, $\SmallMatrix{p_k}{p_{k - 1}}{q_k}{q_{k - 1}} = T^{2c_0} S T^{2c_1} S \cdots T^{2c_k}S \in \mathrm{SL}_2(\Z)$, which implies $p_k q_{k - 1} - p_{k - 1} q_k = 1$. 

The partial fractions $\frac{p_k}{q_k}$ will be useful in finding a sequence of partial quotients $c_i$ of a $\Gamma_\theta$-type c.f.e. leading to an approximation of real numbers in the same way as the classical continued fractions. We start by proving two lemmas, relating the resulting sequence of partial fractions to the sequence of partial quotients. These are analogs for the classical results for the $\SL$-type continued fractions. 

\begin{lem} \label{lemrel}
Let $c_1, c_2, ...$ be a sequence of non-zero integers, $k \geq 1$, and $\frac{p_k}{q_k} = \llbracket 2c_1, ..., 2c_k \rrbracket$ be the $k$-th partial fraction. The partial fractions $\frac{p_k}{q_k}$ satisfy $p_k = 2c_k p_{k - 1} - p_{k - 2}$ and $q_k = 2c_k q_{k - 1} - q_{k - 2}$ for all $k \geq 3$.
\end{lem}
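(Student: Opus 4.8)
The plan is to read the recurrence off the matrix factorization already established in this subsection. Set $A_k := T^{2c_k} S$. Since $\frac{p_k}{q_k} = \llbracket 2c_1, \dots, 2c_k \rrbracket = \llbracket 0; 2c_1, \dots, 2c_k \rrbracket$, the relevant matrix is $M_k := S\, T^{2c_1} S \cdots T^{2c_k} S$, whose first column is $\left(\begin{smallmatrix} p_k \\ q_k \end{smallmatrix}\right)$ and which satisfies $M_k = M_{k-1} A_k$. A direct computation gives
\[
A_k = T^{2c_k} S = \begin{pmatrix} 1 & 2c_k \\ 0 & 1 \end{pmatrix}\begin{pmatrix} 0 & -1 \\ 1 & 0 \end{pmatrix} = \begin{pmatrix} 2c_k & -1 \\ 1 & 0 \end{pmatrix},
\]
so the whole statement reduces to multiplying out $M_{k-1} A_k$ and comparing first columns.

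First I would pin down the second column of each $M_j$. Since the second column of $A_k$ is $\left(\begin{smallmatrix} -1 \\ 0\end{smallmatrix}\right)$, the second column of $M_k = M_{k-1} A_k$ equals $-1$ times the first column of $M_{k-1}$; that is, the second column of $M_k$ is $\left(\begin{smallmatrix} -p_{k-1} \\ -q_{k-1}\end{smallmatrix}\right)$, and in particular the second column of $M_{k-1}$ is $\left(\begin{smallmatrix} -p_{k-2} \\ -q_{k-2}\end{smallmatrix}\right)$. Comparing the first columns in $M_k = M_{k-1} A_k$ then gives
\[
\begin{pmatrix} p_k \\ q_k \end{pmatrix} = 2c_k \begin{pmatrix} p_{k-1} \\ q_{k-1} \end{pmatrix} + \begin{pmatrix} -p_{k-2} \\ -q_{k-2} \end{pmatrix},
\]
which is exactly the claimed recurrence $p_k = 2c_k p_{k-1} - p_{k-2}$, $q_k = 2c_k q_{k-1} - q_{k-2}$; it is valid as soon as $k - 2 \geq 1$, i.e. for $k \geq 3$. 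The base data $\frac{p_1}{q_1} = \llbracket 2c_1 \rrbracket = -\frac{1}{2c_1}$ and $\frac{p_2}{q_2} = \llbracket 2c_1, 2c_2 \rrbracket = -\frac{2c_2}{4c_1 c_2 - 1}$ I would compute by hand to fix the signs and confirm consistency.

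The one genuinely delicate point is the sign bookkeeping. Because the negative continued fraction is built from $S$ with $S^2 = -I$, the pair $(p_{k-1}, q_{k-1})$ appearing as the second column of $M_k$ is the \emph{negative} of the pair appearing as the first column of $M_{k-1}$, and it is precisely this sign flip that produces the minus signs in the recurrence --- in contrast to the classical $\SL$-type case, where the building block is $V = \SmallMatrix{1}{0}{1}{1}$ and the analogous recurrence carries plus signs. I would therefore fix $p_k, q_k$ once and for all as the entries of the first column of $M_k$ (equivalently, normalize the overall sign of the coprime representative of $\llbracket 2c_1, \dots, 2c_k \rrbracket$ by this matrix convention), rather than treating $\frac{p_k}{q_k}$ as a bare rational with undetermined sign; everything else is a one-line matrix multiplication.
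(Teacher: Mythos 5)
Your proof is correct, but it is not the paper's argument. The paper proceeds by induction on the length of the continued fraction: it computes the convergents for $k \leq 3$ by hand, peels off the \emph{leading} quotient $2c_1$, writes $\frac{p_k}{q_k}$ in terms of the tail $\llbracket 2c_2; 2c_3, \ldots, 2c_k \rrbracket$, and applies the inductive hypothesis to that shorter fraction --- pure fraction arithmetic, peeled from the front. You instead peel from the back: from $M_k = M_{k-1}\,(T^{2c_k}S)$ with $T^{2c_k}S = \SmallMatrix{2c_k}{-1}{1}{0}$, the recurrence is literally the statement that the first column of $M_k$ equals $2c_k$ times the first column of $M_{k-1}$ plus the second column of $M_{k-1}$, while the second column of any $M_j$ is minus the first column of $M_{j-1}$. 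This transfer-matrix computation buys two things. First, it fixes the sign convention explicitly: the recurrence is sign-sensitive (it fails if one flips the sign of a single coprime pair $(p_j, q_j)$), and your normalization of $(p_k, q_k)$ as the first column of $M_k$, checked against $p_1/q_1 = -1/(2c_1)$ and $p_2/q_2 = -2c_2/(4c_1c_2 - 1)$, agrees with the convention the paper adopts only implicitly through its explicit computations. Second, your bookkeeping quietly sharpens the paper's preamble: there the matrix $T^{2c_0}S\,T^{2c_1}S \cdots T^{2c_k}S$ is displayed with second column $(p_{k-1}, q_{k-1})^T$, whereas under the consistent normalization it is $-(p_{k-1}, q_{k-1})^T$, so the determinant identity reads $p_k q_{k-1} - p_{k-1} q_k = -1$ rather than $+1$; this is harmless downstream, since only unimodularity of the matrix is ever used. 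What the paper's route buys in exchange is independence from the matrix formalism, at the cost of some care with the tail convention (its key displayed identity is the one for $\frac{p_k'}{q_k'} = \llbracket 2c_2; 2c_3, \ldots, 2c_k \rrbracket$, with $2c_2$ as integer part), which your approach avoids entirely.
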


\begin{proof}
By induction. The first three partial fractions are $\frac{p_1}{q_1} = \frac{-1}{2c_1}$, $\frac{p_2}{q_2} = \frac{-2c_2}{4c_1c_2 - 1}$, and $\frac{p_3}{q_3} = \frac{1 - 4c_2c_3}{8c_1c_2c_3 - 2c_1 - 2c_3}$. For $k= 3$, we directly verify the claim.

Suppose the claim for $k - 1 \geq 3$. Let $\frac{p_k'}{q_k'} = \llbracket 2c_2, 2c_3, ..., 2c_k \rrbracket$. We observe that  $\frac{p_k}{q_k} = - \frac{1}{2c_1 - \frac{1}{p_k' / q_k'}} = \frac{-p_k'}{2c_1 p_k' - q_k'}$ and hence $p_k = -p_k'$ and $q_k = 2c_1 p_k' - q_k'$. By the induction hypothesis, we see that $$ \frac{p_k}{q_k} = - \frac{1}{2c_1 - \frac{q_k'}{p_k'}} = - \frac{1}{2c_1 - \frac{2c_k q_{k - 1}' - q_{k - 2}'}{2c_k p_{k - 1}' - p_{k - 2}}} = \frac{p_{k - 2}' - 2c_k p_{k - 1}'}{2c_1c_k p_{k - 1}' - 2c_1 p_{k - 2}' - 2c_k q_{k - 1}' + q_{k - 2}'}.$$

Hence \begin{align*}
    &p_k = p_{k - 2}' - 2c_k p_{k - 1}' = 2c_k p_{k - 1} - p_{k - 2} \; \mathrm{and} \\
    &q_k = 2c_k (2c_1 p_{k - 1}' - q_{k - 1}') - (2c_1 p_{k - 2}' - q_{k - 2}') = 2c_k q_{k - 1} - q_{k - 2}.
\end{align*}
\end{proof}

With Lemma \ref{lemrel} it is then easy to show the following useful fact about the $k$-th partial fractions. 

\begin{lem}\label{lemconv}
Let $c_1, c_2, ...$ be any sequence of non-zero integers, $k \geq 1$, and $\frac{p_k}{q_k} = \llbracket 2c_1, ..., 2c_k \rrbracket$ be the $k$-th partial fraction. Then $\vert p_{k - 1} \vert < \vert p_k \vert$ and $\vert q_{k - 1} \vert < \vert q_k \vert$ for all $k \geq 1$. In particular, $\vert p_k \vert$, $\vert q_k \vert \to + \infty$ as $k \to + \infty$. 
\end{lem}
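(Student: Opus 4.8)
The plan is to run a straightforward induction on $k$, using the linear recurrence from Lemma \ref{lemrel} together with the fact that each $c_k$ is a \emph{non-zero} integer, so that $|c_k| \geq 1$. The guiding observation is that the recurrences $p_k = 2c_k p_{k-1} - p_{k-2}$ and $q_k = 2c_k q_{k-1} - q_{k-2}$ are \emph{expansive}: the leading coefficient $2c_k$ has absolute value at least $2$, which is exactly enough to dominate the subtracted term once the previous gap $|p_{k-2}| < |p_{k-1}|$ (resp. for $q$) is known.

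First I would record the initial data. From the explicit values computed in the proof of Lemma \ref{lemrel} one has $p_1 = -1$, $q_1 = 2c_1$, $p_2 = -2c_2$, and $q_2 = 4c_1c_2 - 1$; setting $\frac{p_0}{q_0} = \frac{0}{1}$ for the empty partial fraction (consistent with $S.\infty = 0$), the base cases $k = 1, 2$ are immediate. Indeed $|p_0| = 0 < 1 = |p_1|$ and $|p_1| = 1 < 2|c_2| = |p_2|$, while $|q_0| = 1 < 2|c_1| = |q_1|$ and $|q_1| = 2|c_1| < 4|c_1||c_2| - 1 \leq |4c_1c_2 - 1| = |q_2|$, each inequality using $|c_1|, |c_2| \geq 1$ and the reverse triangle inequality.

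For the inductive step ($k \geq 3$) I would assume $|p_{k-2}| < |p_{k-1}|$ and estimate, via the reverse triangle inequality and $|c_k| \geq 1$,
\[
|p_k| = |2c_k p_{k-1} - p_{k-2}| \geq 2|c_k|\,|p_{k-1}| - |p_{k-2}| \geq 2|p_{k-1}| - |p_{k-2}| > |p_{k-1}|,
\]
where the last step invokes the induction hypothesis. The identical computation with $q$ in place of $p$ yields $|q_{k-1}| < |q_k|$, closing the induction and establishing both strict monotonicity statements for all $k \geq 1$.

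Finally, the ``in particular'' claim follows at once: $(|p_k|)_{k \geq 0}$ and $(|q_k|)_{k \geq 0}$ are strictly increasing sequences of non-negative integers, so each increases by at least $1$ at every step, giving $|p_k| \geq k$ and $|q_k| \geq k$, whence both tend to $+\infty$. I do not anticipate a genuine obstacle here; the only mild care needed is to use $|c_k| \geq 1$ rather than $c_k \geq 1$ (which may fail, since negative partial quotients are allowed) and to keep the reverse triangle inequality oriented correctly, since the $p_k$ and $q_k$ may carry either sign.
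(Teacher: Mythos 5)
Your proof is correct and takes essentially the same route as the paper's: induction on $k$ via the recurrence of Lemma \ref{lemrel}, using the reverse triangle inequality and $\vert c_k \vert \geq 1$ to get $\vert q_k \vert \geq 2 \vert c_k \vert \vert q_{k-1} \vert - \vert q_{k-2} \vert > \vert q_{k-1} \vert$. The only cosmetic differences are that you check the base case directly where the paper argues by contradiction, and you make the convention $p_0/q_0 = 0/1$ explicit so that the $k = 1$ case is genuinely covered --- a small point the paper leaves implicit.
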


\begin{proof}
By induction. The proof is essentially the same for $p_k$ and $q_k$, so we will only present the one for $q_k$. We have $q_1 = 2c_1$ and $q_2 = 4c_1c_2 - 1$. Suppose $\vert q_1 \vert \geq \vert q_2 \vert$. Then we would have \begin{equation} \label{qest}
    2 \vert c_1 \vert = \vert q_1 \vert \geq \vert q_2 \vert \geq 4 \vert c_1 \vert - 1,
\end{equation}
but there is no non-zero integer $c_1$ satisfying (\ref{qest}). 

Assume the statement for $k - 1 \geq 2$, i.e. $\vert q_{k - 1} \vert > \vert q_{k - 2} \vert$. Then by Lemma \ref{lemrel}, we have $$\vert q_k \vert \geq 2 \vert c_k \vert \vert q_{k - 1} \vert - \vert q_{k - 2} \vert > \vert q_{k - 1} \vert,$$
as $\vert c_k \vert \geq 1$. 
\end{proof}

With Lemma \ref{lemconv} we are set to prove the approximation of an irrational number by a $\G_\theta$-type c.f.e. and its proof also gives the algorithm by which such an approximation can be computed.

\begin{prop} \label{apprprop}
Let $x \in (-1, 1)$ be irrational. There is a sequence $c_1, c_2, ...$ of non-zero integers such that $ \llbracket 2c_1, 2c_2, ..., 2c_k \rrbracket \to x$ as $k \to + \infty$. 
\end{prop}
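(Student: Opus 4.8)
The plan is to mimic the greedy algorithm behind the classical continued fraction expansion, but to round to the nearest \emph{even} integer at each stage so that every partial quotient has the form $2c_k$. Concretely, I would set $x_0 = x$ and, given an irrational $x_k \in (-1,1)$, let $2c_{k+1}$ be the even integer nearest to $-1/x_k$ and put $x_{k+1} = -\frac{1}{x_k} - 2c_{k+1}$. Since $|x_k| < 1$ forces $|{-1/x_k}| > 1$, the nearest even integer is nonzero, so $c_{k+1} \neq 0$; because $-1/x_k$ is irrational it is never an odd integer, so the nearest even integer (hence $c_{k+1}$) is well defined and unique, and $|x_{k+1}| < 1$. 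As $x_{k+1}$ is again irrational and nonzero, the recursion never terminates and yields an infinite sequence $c_1, c_2, \ldots$ of nonzero integers. Unwinding the definition gives $x_k = -\frac{1}{2c_{k+1} + x_{k+1}}$, and iterating shows $x = \llbracket 0; 2c_1, \ldots, 2c_k, 2c_{k+1} + x_{k+1} \rrbracket$, so the truncation $\llbracket 2c_1, \ldots, 2c_k \rrbracket = \frac{p_k}{q_k}$ arises simply by dropping the remainder term $x_k$.

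Next I would record the exact value of $x$ in terms of the partial fractions. Writing $\beta_k = 2c_k + x_k$ for the $k$-th complete quotient and arguing by induction from the recursions $p_k = 2c_k p_{k-1} - p_{k-2}$ and $q_k = 2c_k q_{k-1} - q_{k-2}$ of Lemma \ref{lemrel}, I expect to obtain
$$x = \frac{\beta_k p_{k-1} - p_{k-2}}{\beta_k q_{k-1} - q_{k-2}} = \frac{p_k + x_k p_{k-1}}{q_k + x_k q_{k-1}}.$$
Combining this with the determinant identity $p_k q_{k-1} - p_{k-1} q_k = 1$, valid because $\SmallMatrix{p_k}{p_{k-1}}{q_k}{q_{k-1}} \in \SL$ as shown just above Lemma \ref{lemrel}, and subtracting $\frac{p_k}{q_k}$ collapses the difference to
$$x - \frac{p_k}{q_k} = \frac{-x_k}{q_k\,(q_k + x_k q_{k-1})}.$$

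Finally I would estimate this error. Since $|x_k| < 1$ and, by Lemma \ref{lemconv}, $|q_{k-1}| < |q_k|$ with $q_{k-1}, q_k$ integers, we have $|q_k + x_k q_{k-1}| \geq |q_k| - |q_{k-1}| \geq 1$, so the denominator is nonzero and
$$\left| x - \frac{p_k}{q_k} \right| = \frac{|x_k|}{|q_k|\,|q_k + x_k q_{k-1}|} < \frac{1}{|q_k|}.$$
As $|q_k| \to +\infty$ by Lemma \ref{lemconv}, the right-hand side tends to $0$, giving $\llbracket 2c_1, \ldots, 2c_k \rrbracket \to x$ and completing the proof.

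The step I expect to require the most care is the exact-value identity $x = \frac{p_k + x_k p_{k-1}}{q_k + x_k q_{k-1}}$: the sign conventions of the negative continued fraction $\llbracket \cdot \rrbracket$ differ from those of the ordinary $[\cdot]$-expansion, so the induction and the bookkeeping of the complete quotients $\beta_k$ must be carried out carefully in order to land the single clean minus sign in the numerator of the error term. Everything else—well-definedness of the rounding step, preservation of irrationality along the recursion, and the final estimate—is then routine given Lemmas \ref{lemrel} and \ref{lemconv}.
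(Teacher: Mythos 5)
Your proof is correct and is essentially the paper's own argument: the paper runs the same greedy algorithm (its choice of nonzero $c_k$ with remainder $x_k \in (-1,1)$ is exactly your nearest-even-integer rounding, unique for irrational input), and then derives the identical bound $\left\vert x - \frac{p_k}{q_k} \right\vert \leq \frac{1}{\vert q_k \vert} \to 0$ from the unimodularity of $\SmallMatrix{p_k}{p_{k-1}}{q_k}{q_{k-1}}$ together with Lemma \ref{lemconv}. Your bookkeeping via complete quotients $\beta_k = 2c_k + x_k$, as opposed to the paper's writing the exact value with last entry $x_k^{-1}$, is only a cosmetic difference.
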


\begin{proof}
We construct the sequence $c_1, c_2, ...$ as follows. Since $\vert x \vert < 1$, we have $\frac{1}{\vert x \vert} > 1$. Pick $c_1 \in \Z$ non-zero such that $x_1 = 2c_1 - \frac{1}{x} \in (-1, 1)$ or, equivalently, $x = \frac{1}{2c_1 - x_1} = \llbracket 2c_1, x_1^{-1} \rrbracket$. Then pick $c_2 \in \Z$ non-zero such that $x_2 = 2c_2 - \frac{1}{x_1} \in (-1, 1)$, which again is possible as $\frac{1}{\vert x_1 \vert} > 1$. Equivalently, we may also write $x = \llbracket 2c_1, 2c_2, x_2^{-1} \rrbracket$. Continue this process to get a (unique) finite sequence $c_1, ..., c_k \in \Z - \{0\}$ and $x_1, ..., x_k \in (-1, 1)$ with $x = \frac{1}{2c_1 - \frac{1}{2c_2 - \frac{1}{\cdots - x_k}}} = \llbracket 2c_1, ..., 2c_k, x_k^{-1}\rrbracket$. 

Now observe that \begin{align*}
    \vert \llbracket 2c_1, ..., 2c_k, x_k^{-1} \rrbracket - \llbracket 2c_1, ..., 2c_k \rrbracket \vert &= \left\vert \frac{\frac{1}{x_k} p_k - p_{k - 1}}{\frac{1}{x_k} q_k - q_{k - 1}} - \frac{p_k}{q_k} \right\vert \\
    &= \left\vert \frac{p_k q_{k - 1}- p_{k - 1}q_k}{q_k(\frac{1}{x_k}q_k - q_{k - 1})} \right\vert \\
    &= \frac{1}{\vert q_k \vert \vert \frac{1}{x_k} q_k - q_{k - 1} \vert} \\
    &\leq \frac{1}{\vert q_k \vert} \to 0, \; \mathrm{as} \; k \to + \infty, 
\end{align*}
where we used Lemma \ref{lemconv} and that $\SmallMatrix{p_k}{p_{k - 1}}{q_k}{q_{k - 1}}$ is a unimodular matrix. 
\end{proof}

Note that Proposition \ref{apprprop} gives a \textit{sequence} of integers, which approximates $x$ arbitrarily close. If we were given a $\varepsilon > 0$ and just wanted to find a c.f.e. $\llbracket 2c_1, ..., 2c_n \rrbracket$ in $(x - \varepsilon, x + \varepsilon)$, we could have picked a rational $\frac{a}{c} \in (x - \varepsilon, x + \varepsilon)$ with $c > 0$, $(a, c) = 1$ and $a + c$ odd. Finding $b, d \in \Z$ such that $ad - bc = 1$ yields a matrix $A = \SmallMatrix{a}{b}{c}{d} \in \G_\theta$. Writing the matrix $A$ in terms of the generators $T^2, S$ gives $\llbracket 2c_1, ..., 2c_n \rrbracket$. The downside of this argument is that we would still not know how to construct a c.f.e. with the desired properties, while the proof of Proposition \ref{apprprop} gives an explicit algorithm. Moreover, the algorithm shows that c.f.e. of $\G_\theta$-type are unique (this can be proved rigorously through induction on the length of a c.f.e.).

\smallskip

\subsection{$\G^0(2)$-type c.f.e.} \label{g2sec}

We now turn from the negative continued fractions to the positive continued fraction as in (\ref{contfracdef}). Again, if $a, c \in \Z$ with $(a, c) = 1$ and $a$ an odd integer, we may find $d, b \in \Z$ such that $ad - 2bc = 1$ and $\SmallMatrix{a}{2b}{c}{d} \in \G^0(2)$. Writing $\SmallMatrix{a}{2b}{c}{d} = T^{2a_0} V^{a_1} \cdots T^{2a_{n - 1}} V^{a_n}$ in terms of the generators $T^2$ and $V$ of $\G^0(2)$ gives rise to a c.f.e.: $$\frac{a}{c} = \SmallMatrix{a}{2b}{c}{d}.\infty = T^{2a_0} V^{a_1} \cdots T^{2a_{n - 1}} V^{a_n}.\infty = [2a_0; a_1, ..., 2a_{n - 1}, a_n].$$
We call a c.f.e. of this kind a $\G^0(2)$-type c.f.e.

In the remainder of this section, we present an algorithm for finding the $\G^0(2)$-type c.f.e. of a rational number. First, we prove a technical lemma, relating the length of the continued fraction to the parity of the numerator of the rational it is representing. This will make the description of the algorithm cleaner.

\begin{lem} \label{categorizing}
Let $a_0 \in \Z$ and $a_1, ..., a_n$ be non-zero integers such that $a_{2k}$ is even for all integers $k = 0, 1, ..., \floor{\frac{n}{2}}$. Pick $c > 0$ and $d \in \Z$ with $(c, d) = 1$ such that $\frac{d}{c} = [a_0; a_1, ..., a_n]$. Then $d \equiv n \pmod 2$. 
\end{lem}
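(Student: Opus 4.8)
The plan is to work with the standard convergent recurrence of the continued fraction $[a_0; a_1, \ldots, a_n]$ and track everything modulo $2$. Write $\frac{h_k}{k_k} = [a_0; a_1, \ldots, a_k]$ for the $k$-th convergent, normalized by the usual matrix identity
$$\begin{pmatrix} h_k & h_{k-1} \\ k_k & k_{k-1} \end{pmatrix} = \begin{pmatrix} a_0 & 1 \\ 1 & 0\end{pmatrix}\begin{pmatrix} a_1 & 1 \\ 1 & 0\end{pmatrix} \cdots \begin{pmatrix} a_k & 1 \\ 1 & 0\end{pmatrix},$$
so that $h_k = a_k h_{k-1} + h_{k-2}$ and $k_k = a_k k_{k-1} + k_{k-2}$ with initial values $h_{-1} = 1$, $h_{-2} = 0$. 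Since this is a product of $k+1$ matrices each of determinant $-1$, we get $h_k k_{k-1} - h_{k-1} k_k = (-1)^{k+1}$; in particular $(h_n, k_n) = 1$, so $\frac{h_n}{k_n}$ is already in lowest terms. The normalization $c > 0$ then forces $d = \pm h_n$ and $c = \pm k_n$ for a common sign, and since the sign does not affect parity, it suffices to show $h_n \equiv n \pmod 2$.

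The core claim I would prove by induction on $k$ is that $h_k \equiv k \pmod 2$ for every $k \geq 0$. For the base cases, $h_0 = a_0$ is even by hypothesis, so $h_0 \equiv 0$, while $h_1 = a_0 a_1 + 1 \equiv 1 \pmod 2$ because $a_0$ is even; these agree with $0$ and $1$. For the inductive step with $k \geq 2$, I would reduce $h_k = a_k h_{k-1} + h_{k-2}$ modulo $2$ and split on the parity of the index $k$. If $k$ is even then $a_k$ is even by hypothesis, whence $h_k \equiv h_{k-2} \equiv k-2 \equiv k$. If $k$ is odd then $k-1$ is even and the induction hypothesis gives $h_{k-1} \equiv k-1 \equiv 0$, so the unconstrained factor $a_k$ is irrelevant: $a_k h_{k-1} \equiv 0$, and again $h_k \equiv h_{k-2} \equiv k-2 \equiv k \pmod 2$. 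This closes the induction and yields $h_n \equiv n$, hence $d \equiv n \pmod 2$.

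The mechanism that makes the argument work—and the only point requiring care—is precisely the interaction between the two hypotheses: the odd-indexed partial quotients $a_k$ are completely unconstrained, but in the recurrence for an odd index $k$ they always multiply $h_{k-1}$, whose index $k-1$ is even; the even-index hypothesis forces $h_{k-1}$ to be even, so the unknown quotient never influences the parity. The remaining bookkeeping—the non-vanishing of the $a_i$ guaranteeing the expression is defined, the determinant computation ensuring coprimality, and the passage from the convergent numerator $h_n$ to the normalized numerator $d$ with $c > 0$—is routine and leaves all parities unchanged, so I expect no genuine obstacle beyond setting up the induction correctly.
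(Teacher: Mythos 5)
Your proof is correct, but it takes a different route from the paper's. The paper argues by induction on $n$ in steps of two, peeling off the \emph{head} of the continued fraction: writing $\frac{d'}{c'} = [a_2; a_3, \ldots, a_n]$, it notes that this tail satisfies the same hypotheses after reindexing, applies the induction hypothesis to get $d' \equiv n-2 \pmod 2$, and then observes that $[a_0; a_1, \ldots, a_n] = a_0 + \frac{1}{a_1 + \frac{1}{d'/c'}}$ has numerator congruent to $d'$ modulo $2$ because $a_0$ is even. You instead build up from the front with the standard convergent recurrence $h_k = a_k h_{k-1} + h_{k-2}$ and prove the stronger invariant $h_k \equiv k \pmod 2$ for \emph{every} convergent numerator, with the parity case split (even $k$: $a_k$ even kills the first term; odd $k$: $h_{k-1}$ is even by induction, so the unconstrained $a_k$ is harmless). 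Your approach buys two things: it yields the parity of all convergents at once, and the determinant identity $h_k k_{k-1} - h_{k-1} k_k = (-1)^{k+1}$ makes the lowest-terms normalization ($d = \pm h_n$) completely explicit, a point the paper's proof treats implicitly. The paper's approach buys brevity: it stays within the recursive definition of the continued fraction and never needs the convergent machinery, at the cost of silently invoking that the displayed fraction is in lowest terms up to sign (which follows from $\gcd(d', c') = 1$, but is not spelled out).
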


\begin{proof}
By induction. The case $n = 0$ is seen from $a_0 \equiv 0 \pmod 2$ and for $n = 1$, we have $\frac{d}{c} = a_0 - \frac{1}{a_1} = \frac{a_0a_1 - 1}{a_1}$, and it is hence clear that $d$ is odd.

Suppose the claim holds for $n - 2 \geq 0$ and write $\frac{d'}{c'} = [a_2, ..., a_n]$. Then $$[a_0; a_1, ..., a_n] = a_0 + \frac{1}{a_1 + \frac{c'}{d'}} = \frac{a_0 (a_1 + c') + d'}{a_1 + c'} = \frac{d}{c},$$
hence, as $a_0$ is even, $d \equiv d' \pmod 2$.
\end{proof}

\begin{prop} \label{contfracg2}
Let $c > 0$ and $d$ an odd integer such that $(c, d) = 1$. There exists a c.f.e. $\frac{d}{c} = [2a_0; a_1, 2a_2, ..., a_n]$ of odd length $n \geq 1$ with $a_0 \in \Z$, $a_1, ..., a_{n} \in \Z - \{ 0 \}$ and $\vert a_1 \vert$, $\vert a_3 \vert$, $...$, $\vert a_{n - 2} \vert > 1$.
\end{prop}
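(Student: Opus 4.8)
The plan is to proceed by strong induction on the denominator $c$, turning the statement into an explicit reduction algorithm that mirrors the one behind Proposition \ref{apprprop}. Given $\frac{d}{c}$ with $d$ odd and $(d,c) = 1$, I would first extract the even leading quotient by choosing $a_0 \in \Z$ to be a nearest integer to $\frac{d}{2c}$ and setting $r_0 = d - 2a_0 c$. Since $d$ is odd, $r_0$ is odd and hence nonzero, and the nearest-integer choice gives $|r_0| \le c$; the bound is strict for $c > 1$, because $|r_0| = c$ would force $c \mid d$ and contradict coprimality, while for $c = 1$ one necessarily has $|r_0| = 1$. This produces $\frac{d}{c} = 2a_0 + \frac{r_0}{c} = 2a_0 + \frac{1}{c/r_0}$, which opens the $\G^0(2)$-type expansion with the even quotient $2a_0$.

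The induction then splits according to $|r_0|$. If $|r_0| = 1$ (in particular whenever $c = 1$), then $c/r_0 = c r_0$ is an integer and $\frac{d}{c} = [2a_0; c r_0]$ terminates with $n = 1$; here $a_1 = c r_0 \ne 0$ is the last quotient, so the hypotheses on $a_1, \dots, a_{n-2}$ are vacuous. If $|r_0| > 1$, I extract the next (free) quotient $a_1$ from $t = c/r_0$, which satisfies $|t| > 1$. The key point, and the step I expect to be the main obstacle, is to choose $a_1$ so that it is simultaneously forced to have $|a_1| \ge 2$ (as required for a non-terminal odd-index quotient) and still reduces the problem: taking the nearest integer to $t$ can produce $a_1 = \pm 1$, which is not allowed. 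So when $1 < |t| < \frac{3}{2}$ I would instead round ``away'', setting $a_1 = 2\,\sgn(t)$, and otherwise take $a_1$ to be the nearest integer. In every case $|a_1| \ge 2$ and $|t - a_1| < 1$.

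Setting $s = c - a_1 r_0 = r_0(t - a_1)$, I then have $0 < |s| = |r_0|\,|t - a_1| < |r_0| < c$, and $\frac{d}{c} = [2a_0; a_1, \dots]$ where the tail is the expansion of $w_2 = \frac{r_0}{s}$. Because $(r_0, s) = (r_0, c) = (d, c) = 1$, the numerator $r_0$ is odd, and the denominator $|s|$ is strictly smaller than $c$, the induction hypothesis applies (after flipping both signs so the denominator is positive, which preserves oddness of the numerator) and yields a $\G^0(2)$-type expansion $[2a_2; a_3, \dots, a_n]$ of $w_2$ with the stated properties. Splicing gives $\frac{d}{c} = [2a_0; a_1, 2a_2, a_3, \dots, a_n]$: the even-index quotients remain even, $a_1$ joins the list of free quotients with $|a_1| > 1$, and the terminal quotient $a_n$ is inherited unchanged.

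Finally I would confirm the length is odd and the size conditions hold. Each reduction step appends exactly one even-index and one free quotient, and termination occurs at a free (odd-index) quotient, so $n$ is odd; this is also forced directly by Lemma \ref{categorizing}, since all even-index quotients are even and then $d \equiv n \pmod 2$ with $d$ odd. Only the non-terminal steps contribute the quotients $a_1, a_3, \dots, a_{n-2}$, and these all satisfy $|a_i| \ge 2$ by the rounding rule, while the terminal quotient $a_n$ is merely required to be nonzero. The substance of the argument is thus the verification in the second case that the ``round away'' choice of $a_1$ keeps $0 < |s| < c$; the remainder is the coprimality/parity bookkeeping and the appeal to Lemma \ref{categorizing}.
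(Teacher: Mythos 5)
Your strategy is essentially the paper's own proof in recursive packaging: the paper runs the same alternating quotient-extraction as a loop (choose an even quotient so the remainder lands in $[-1,1]$, then a free quotient of modulus at least $2$ so the remainder lands in $(-1,1)$, repeat) and proves termination by the strict decrease of denominators, which is exactly what your strong induction on $c$ encodes. However, as written your inductive step has a genuine gap at the splicing stage. The Proposition allows the \emph{leading} coefficient $a_0$ to be zero, and that is precisely the role played by $a_2$ when you invoke the induction hypothesis on $w_2 = r_0/s$: the hypothesis only hands you an expansion $[2a_2; a_3, \dots, a_n]$ in which $a_2$ may a priori be $0$. After splicing, though, $2a_2$ occupies an \emph{interior} position of $[2a_0; a_1, 2a_2, a_3, \dots, a_n]$, where both the Proposition and the paper's definition (\ref{contfracdef}) of a continued fraction demand a non-zero entry. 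Nothing in your argument rules out $a_2 = 0$, so the claim that the spliced expansion ``has the stated properties'' does not follow from the induction hypothesis alone.

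The gap is real but easily repaired, and you already hold the needed ingredient: since $|s| = |r_0|\,|t - a_1| < |r_0|$, the tail satisfies $|w_2| = |r_0|/|s| > 1$. So strengthen the statement proved by induction to ``\dots and moreover, if $|d/c| > 1$, the expansion can be chosen with $a_0 \neq 0$.'' Your own nearest-integer rule delivers this strengthening, because $|d/(2c)| > 1/2$ forces the nearest integer to be non-zero, and it is preserved through the induction; applied to $w_2$ it gives $a_2 \neq 0$, closing the hole. (This is also why the paper's iterative formulation never meets the problem: there each even-indexed quotient is chosen for the number $1/x_{2k-1}$, which has modulus strictly greater than $1$, so that quotient is automatically non-zero.) A second, much smaller point: you should specify that ties in the nearest-integer choice of $a_1$ are broken away from zero, so that $|a_1| \geq 2$ holds in all cases; in fact $|t| = 3/2$ cannot occur here, since $2c = 3r_0$ would force $r_0$ to be even while $r_0 = d - 2a_0c$ is odd, but the proof should say so.
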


\begin{proof}
The algorithm is similar to the one given in the proof of Proposition \ref{apprprop}. First, pick $a_0 \in \Z$ such that $x_0 = \frac{d}{c} - 2a_0 \in [-1, 1]$. It is not possible that $x_0 = 0$, as then $\frac{d}{c} = [2a_0]$ would be such that $d \equiv 0 \pmod 2$, which we excluded. 

Pick $a_1 \notin \{-1, 0, 1\}$ such that $x_1 = \frac{1}{x_0} - a_1 \in (-1, 1)$. If $x_1 = 0$, we are done. If no such $a_1$ exists, then $x_0 = \pm 1$. Thus, we may pick $a_1 = x_0$ and we are done.

Continue this process to obtain $x_2, x_3, ...$ with $x_{2n} \in [-1, 1]$ and $x_{2n + 1} \in (-1, 1)$ until $x_{2n + 1} = 0$ for an $n \geq 0$. Note that it is not possible that $x_{2n} = 0$, since by Lemma \ref{categorizing} this contradicts the assumption that $d$ is odd.

That the process stops can be seen in the following way. Write $x_1 = \frac{d_1}{c_1}$ with $c_1 > 0$ and $-c_1 < d_1 < c_1$ coprime to $c_1$. Then $x_2 = \frac{c_1 - 2a_2 d_1}{d_1} = \frac{d_2}{c_2}$ with $c_2 > 0$ and $-c_2 \leq d_2 \leq c_2$ coprime to $c_2$. Since $c_2$ divides $d_1$ and $\vert d_1 \vert < c_1$, we have $c_2 < c_1$. In fact, the value of the denominator of $x_1, x_2, ...$ strictly decreases with each step. Hence, the process must come to an end.
\end{proof}

Contrary to the $\G_\theta$-type c.f.e., the continued fraction in Proposition \ref{contfracg2} is \textit{not} unique. Take, say, $\frac{1}{x_0} = \frac{7}{3}$ in the proof of Proposition \ref{contfracg2}. We may either pick $a_1 = 2$ or $a_1 = 3$ to get $x_1 = \frac{1}{x_0} - a_1 \in (-1, 1)$. Indeed, the continued fractions $\frac{3}{7} = [0; 2, 2, 1] = [0; 3, -2, 2]$ both are of the form as claimed in Proposition \ref{contfracg2}. 

Uniqueness can be recovered if we require that the partial quotients $a_1$, $2a_2$, $...$, $a_{n - 2}$, $2a_{n - 1}$ are even. In that case, $\frac{d}{c} = [2a_0; a_1, 2a_2, a_3, ..., 2a_{n - 1}, a_n]$ the denominator $c > 0$ will then have the same parity as $a_n$. In particular, if $c$ is even, the $\G^0(2)$-type c.f.e. of $\frac{d}{c}$ can be transformed into a $\G_\theta$-type c.f.e., namely $$[2a_0; a_1, 2a_2, ..., 2a_{n - 1}, a_{n}] = \llbracket 2a_0; - a_1, 2a_2, ..., -2a_{n - 1}, a_{n} \rrbracket.$$ Proposition \ref{apprprop} is hence also applicable for continued fractions of $\G^0(2)$-type. 


\section{Hardy Sums and (Quasi-)Modular Forms} \label{modformssec}

In this section, we show the analog of (\ref{etaeis}) for $\log \theta$ and $\log \theta_4$ and prove Theorem \ref{mainthm}.

\subsection{Representation of Dedekind Sums as Sums of Partial Quotients.} \label{dederepsec} 

Let \begin{equation} \label{fouriere2}
    E_2(z) = 1 - 24 \sum_{n = 1}^\infty \sigma_1(n) e(nz)
\end{equation}
be the classical Eisenstein series of weight $2$, where $\sigma_1(n) = \sum_{d \vert n } d$. It is well-known that $E_2$ transforms under the action of the modular group by \begin{equation} \label{transforme2}
    E_2(z) \vert_2 \SmallMatrix{a}{b}{c}{d} = (cz + d)^{-2} E_2 \left( \frac{az + b}{cz + d} \right) = E_2(z) + \frac{6}{\pi i} \frac{c}{cz + d}, \; \SmallMatrix{a}{b}{c}{d} \in \SL.
\end{equation}


As mentioned in the introduction, $\log \eta(A.z) - \log \eta(z)$ for $A \in \SL$ is up to a constant equal to the cycle integral of $E_2$; see (\ref{etaeis}). This follows from the classical fact that the logarithmic derivative of $\eta(z)$ is equal to $\frac{\pi i}{12} E_2(z)$ and that $\mathrm{H}^1(\SL, \C) = \{0\}$. More precisely, define \begin{equation} \label{nudefi}
    \nu(A) = \log \eta(A.z) - \log \eta(z) - \frac{\pi i}{12} \int_z^{A.z} E_2(w) dw, \; A \in \SL,
\end{equation}
which does not depend on the choice of $z$. For $A, B \in \SL$, it is easy to see that $\nu(AB) = \nu(A) + \nu(B)$. 

Hence, $\nu : \SL \to (\C, +)$ is a group-homomorphism. But since the generators $S$ and $TS$ of $\SL$ are torsion-elements, there are no non-zero group-homomorphisms $\SL \to (\C, +)$. This proves that $\nu = 0$, i.e. the formula (\ref{etaeis}). 

Equation (\ref{etaeis}) can be used together with the transformation (\ref{logetatrans}) for $\log \eta(z)$ to prove Hickerson's formula (\ref{classhickform}) for the classical Dedekind sums by splitting up the integral of $\displaystyle\int_z^{A.z} E_2(w) \; dw$ into paths from $z_i$ to $T z_i$ or $Vz_i$.


In Hickerson's formula (\ref{classhickform}) we crucially use that the partial quotients of the c.f.e. are positive. But as we will later work with c.f.e. of $\G_\theta$- resp. $\G^0(2)$-type, we shall not require the partial quotients to be positive. In the following proposition, we express the cycle integral $\displaystyle\int_z^{A.z} E_2(w) \; dw$ more generally in terms of a word in $T$ and $V$. The proposition is well-known (see \cite{lang, zagier}). In Appendix \ref{appproof}, we give a proof based on an argument we learned from an unpublished note of W. Duke which does not use the reciprocity law.



\begin{prop} \label{eiscontfrac}
Let $n \geq 1$ be an odd integer and $a_0, ..., a_n$ be integers. Let $A_k = \SmallMatrix{a_k}{(-1)^{k + 1}}{(-1)^k}{0}$ for $k = 0, ..., n$ and write $\SmallMatrix{*}{*}{c_k}{d_k} = A_0 A_1 \cdots A_k$. For $A = \SmallMatrix{a}{b}{c}{d} = A_0 A_1 \cdots A_n$, we have \begin{equation} \label{forminprop}
    \displaystyle\int_z^{A.z} E_2(w) dw = \frac{6}{\pi i} \log \left( \frac{cz + d}{\sgn(c) i} \right)  + \sum_{k = 0}^n (-1)^k a_k + 3 \sum_{k = 1}^n \sgn(c_k d_k).
\end{equation}
\end{prop}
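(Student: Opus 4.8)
The plan is to prove the identity by induction on $n$, peeling off the last factor $A_n$ and reducing the inductive step to a single two‑term identity for the complex logarithm. Throughout I write $I(A;z) := \int_z^{A.z} E_2(w)\,dw$ for $A \in \SL$ and $z \in \uh$, and let $j(g,z)=cz+d$ be the automorphy factor of $g = \SmallMatrix{a}{b}{c}{d}$. The one structural input is the cocycle relation $I(PQ;z) = I(Q;z) + I(P;Q.z)$, which follows at once from splitting the path at $Q.z$ and using $PQ.z = P.(Q.z)$. I would prove the statement for every $z\in\uh$ and all $n\geq 0$ (not only odd $n$), under the standing non-degeneracy assumption that each $c_k \neq 0$ for $0\le k\le n$ — automatic in the continued-fraction applications, where the $c_k$ play the role of denominators, with $c_0=1$. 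The Proposition is then the special case of odd $n$, and the base case $n=0$ is immediate from the building block below, since $(c_0,d_0)=(1,0)$ and the sign sum is empty.

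The heart of the computation is the evaluation of $I$ on a single elementary matrix. Since $A_k.w = (-1)^k a_k - 1/w = T^{(-1)^k a_k}.(S.w)$, splitting the path $w \to S.w \to A_k.w$ gives $I(A_k;w) = \int_w^{S.w}E_2 + \int_{S.w}^{A_k.w}E_2$. For the first integral, the quasimodularity (\ref{transforme2}) yields $\frac{d}{dw}\bigl(\int_w^{S.w}E_2 - \frac{6}{\pi i}\log\frac{w}{i}\bigr)=0$, so the difference is constant on $\uh$ and vanishes at the fixed point $w=i$, where $\int_i^i E_2 = 0$; hence $\int_w^{S.w}E_2 = \frac{6}{\pi i}\log\frac{w}{i}$. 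For the second integral, $A_k.w = S.w + m$ with $m = (-1)^k a_k \in \Z$, and integrating the Fourier expansion (\ref{fouriere2}) of $E_2$ over a horizontal segment of integer length $m$ leaves only the constant term $1$, since every $e(nw')$ is $1$-periodic; thus $\int_{S.w}^{A_k.w}E_2 = m$. Altogether
\[
    I(A_k;w) = \frac{6}{\pi i}\log\frac{w}{i} + (-1)^k a_k, \qquad w \in \uh,
\]
and the principal logarithm is unambiguous because $w\in\uh$ forces $w/i$ into the right half-plane.

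For the inductive step write $B := A_0 \cdots A_{n-1}$, with bottom row $(c_{n-1},d_{n-1})$, so that $A = BA_n$. From $A = BA_n$ and the bottom row $((-1)^n,0)$ of $A_n$ one reads off the two algebraic facts $d_n = (-1)^{n+1}c_{n-1}$ and, by the automorphy cocycle, $j(B,A_n.z) = j(A,z)/j(A_n,z) = (c_nz+d_n)/((-1)^n z)$. Applying the cocycle relation, then the building block to $A_n$ and the induction hypothesis to $B$ (valid at every base point, in particular at $w = A_n.z \in \uh$, using $c_{n-1}\neq 0$), gives
\[
    I(A;z) = \frac{6}{\pi i}\log\frac{z}{i} + (-1)^n a_n + \frac{6}{\pi i}\log\frac{c_{n-1}(A_n.z)+d_{n-1}}{\sgn(c_{n-1})\,i} + \sum_{k=0}^{n-1}(-1)^k a_k + 3\sum_{k=1}^{n-1}\sgn(c_k d_k).
\]
The linear term is already $\sum_{k=0}^n (-1)^k a_k$, so the Proposition follows once I prove the two-term logarithm identity
\begin{equation*}
    \frac{6}{\pi i}\log\frac{z}{i} + \frac{6}{\pi i}\log\frac{c_{n-1}(A_n.z)+d_{n-1}}{\sgn(c_{n-1})\,i} = \frac{6}{\pi i}\log\frac{c_nz+d_n}{\sgn(c_n)\,i} + 3\,\sgn(c_n d_n). \tag{$\star$}
\end{equation*}

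To prove $(\star)$, note first that all three principal logarithms are legitimate: in each case the factor $\sgn(\cdot)\,i$ in the denominator is chosen precisely to rotate a number whose imaginary part has sign $\sgn(\cdot)$ into the right half-plane, so each argument lies in $(-\pi/2,\pi/2)$. Using $j(B,A_n.z) = (c_nz+d_n)/((-1)^n z)$, the moduli of the two sides of $(\star)$ agree outright, so only the argument parts remain. I would then check that the two sides of $(\star)$ have the same $z$-derivative: a direct computation reduces this to $c_nz+d_n+(-1)^n c_{n-1} = c_nz$, i.e.\ to $d_n = (-1)^{n+1}c_{n-1}$, recorded above. Hence the two sides differ by a $z$-independent constant, which I would evaluate at one convenient point (e.g.\ letting $z\to i\infty$, where $A_n.z\to(-1)^n a_n$ and $c_nz+d_n$ is dominated by its imaginary part); a finite case analysis on the signs of $c_n$, $d_n$, and $c_{n-1}$ (using $d_n=(-1)^{n+1}c_{n-1}$ to eliminate one variable) shows the constant equals exactly $3\,\sgn(c_n d_n)$. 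I expect this last step to be the only real obstacle: carefully tracking the principal branch of the logarithm to extract precisely $3\,\sgn(c_n d_n)$, since the telescoping of arguments holds only modulo $2\pi$ and the $\sgn$ term records the branch jumps. Everything else is either cocycle bookkeeping or the two clean integral evaluations in the building block; a secondary technical point is the non-degeneracy $c_k\neq 0$, needed so that every $\sgn(c_k)$ and every intermediate logarithm is well defined.
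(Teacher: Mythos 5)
Your proposal is sound and takes a genuinely different route from the paper. The paper's proof moves the base point to $i$, splits $\int_i^{A.i}E_2$ at \emph{all} the orbit points $A_0\cdots A_{k-1}.i$ at once, integrates each piece against the explicit primitive $\log\bigl(\frac{c_{k-1}w+d_{k-1}}{\sgn(c_{k-1})\,i}\bigr)$, and extracts the sign terms by telescoping the boundary logarithms. You instead peel off one matrix at a time by induction via the cocycle relation $I(PQ;z)=I(Q;z)+I(P;Q.z)$, so that the entire branch bookkeeping is concentrated in the single two-term identity $(\star)$. Your building block $\int_w^{S.w}E_2 = \frac{6}{\pi i}\log\frac{w}{i}$ (constant pinned down at the fixed point $w=i$), the translation integral, and the reduction of the inductive step to $(\star)$ are all correct; proving the statement for all $n\geq 0$ rather than odd $n$ is also the right move, since the induction passes through both parities. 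What your organization buys is modularity; what the paper's buys is that the branch analysis is performed once, explicitly, with each correction visibly equal to $\frac{\pi i}{2}\sgn(c_kd_k)$.

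Two caveats. First, the step you defer to an unexecuted "finite case analysis" is the entire content of the proposition -- everything else, as you say, is bookkeeping -- so it must actually be carried out; fortunately it closes in two lines, with no derivative comparison or limit $z\to i\infty$ needed. Both logarithms on the left of $(\star)$ have arguments in the open right half-plane ($z/i$ because $z\in\uh$; the other because $c_{n-1}(A_n.z)+d_{n-1}$ has imaginary part of sign $\sgn(c_{n-1})$), so their principal logarithms add with no branch correction; using $c_{n-1}(A_n.z)+d_{n-1}=\frac{c_nz+d_n}{(-1)^nz}$ and $\sgn(d_n)=(-1)^{n+1}\sgn(c_{n-1})$, the left side equals $\log\frac{c_nz+d_n}{\sgn(d_n)}$ exactly. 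Now compare with $\log\frac{c_nz+d_n}{\sgn(c_n)\,i}$: the ratio of the two arguments is $\sgn(c_nd_n)\,i$, and since $\arg\frac{c_nz+d_n}{\sgn(c_n)\,i}\in(-\frac{\pi}{2},\frac{\pi}{2})$, shifting by $\pm\frac{\pi}{2}$ stays inside $(-\pi,\pi]$; hence the difference of principal logarithms is exactly $\frac{\pi i}{2}\sgn(c_nd_n)$, which after multiplication by $\frac{6}{\pi i}$ gives $3\sgn(c_nd_n)$ and proves $(\star)$. (This is the same computation the paper performs at each telescoping step.) Second, your standing hypothesis $c_k\neq 0$ for all $k$ genuinely restricts the statement, which allows arbitrary integers $a_k$ and hence vanishing intermediate $c_k$ (those terms contribute $\sgn(c_kd_k)=0$); your induction breaks at such a step, since the inductive formula for $B=A_0\cdots A_{n-1}$ is then meaningless. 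Because the paper's own proof implicitly needs the same nonvanishing, and it holds in every application (positive partial quotients, or the hypotheses of Proposition \ref{comptermlem}), this is acceptable -- but state it as a hypothesis rather than dismiss it as automatic.
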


Note that every matrix $A$ of the form $A = T^{a_0} V^{a_1} \cdots T^{a_{n - 1}} V^{a_n} $ may be written as $$A = T^{a_0} S S^{-1} V^{a_1} T^{a_2} \cdots T^{a_{n - 1}} S S^{-1} V^{a_n} = \SmallMatrix{a_0}{-1}{1}{0} \SmallMatrix{a_1}{1}{-1}{0} \cdots \SmallMatrix{a_{n - 1}}{-1}{1}{0} \SmallMatrix{a_n}{1}{-1}{0},$$
i.e. in the form of Proposition \ref{eiscontfrac}. If the integers $a_1, ..., a_n$ are all positive, we have $c_k > 0$ and $\sgn(d_k) = (-1)^{k + 1}$ for $k = 1, ..., n$ in the notation of Proposition \ref{eiscontfrac} . Thus, $3 \sum_{k = 1}^n \sgn(c_k d_k) = 3$. Hence, if we assume that all partial quotients of the c.f.e. of $\frac{a}{c}$ are positive, we get $$\int_z^{A.z} E_2(w) dw = \frac{6}{\pi i} \log \left( \frac{cz + d}{i} \right) + \sum_{k = 0}^n (-1)^k a_k + 3.$$

Comparing this equation to (\ref{logetatrans}) gives Hickerson's formula (\ref{classhickform}).

If the integers $a_1, ..., a_n$ in Proposition \ref{eiscontfrac} are sometimes negative (as they might be if they were partial quotients of a $\G_\theta$- resp. $\G^0(2)$-type c.f.e.), the expression $3 \sum_{k = 1}^n \sgn(c_k d_k)$ in (\ref{forminprop}) does not necessarily evaluate to $3$ anymore. The next proposition is a variant of Proposition \ref{eiscontfrac}, which expresses the right hand side of (\ref{forminprop}) only in terms of the integers $a_1, ..., a_n$ -- given some additional conditions on $a_1, ..., a_n$, but not requiring them to be positive. In particular, these additional conditions are satisfied in the case where $a_1, ..., a_n$ are partial quotients of a $\G_\theta$- resp. $\G^0(2)$-type c.f.e.

\begin{prop} \label{comptermlem}
Let $n \geq 1$ be an odd integer and $a_0, ..., a_n$ be integers such that  $\vert a_2 \vert, ..., \vert a_{n} \vert$ are at least $2$ and $a_1 \neq 0$. Let $A_k = \SmallMatrix{a_k}{(-1)^{k + 1}}{(-1)^k}{0}$ for $k = 0, ..., n$ and write $\SmallMatrix{*}{*}{c_k}{d_k} = A_0 A_1 \cdots A_k$. For $A = \SmallMatrix{a}{b}{c}{d} = A_0 A_1 \cdots A_n$, we have 
$$\displaystyle\int_z^{A.z} E_2(w) dw = \frac{6}{\pi i} \log \left( \frac{cz + d}{\sgn(c) i} \right)  + \sum_{k = 0}^n (-1)^k a_k - 3 \sum_{k = 1}^n (-1)^k \sgn(a_k).$$
\end{prop}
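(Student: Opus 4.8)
The plan is to deduce Proposition \ref{comptermlem} directly from Proposition \ref{eiscontfrac}. The two statements have identical right-hand sides except for the final term, which is $3\sum_{k=1}^n \sgn(c_k d_k)$ in Proposition \ref{eiscontfrac} and $-3\sum_{k=1}^n (-1)^k \sgn(a_k)$ here. So it suffices to prove the sign identity
\[ \sgn(c_k d_k) = (-1)^{k+1}\sgn(a_k), \qquad k = 1, \dots, n, \]
and then sum and substitute. I would argue this term by term.

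First I would extract recurrences for the bottom row $(c_k, d_k)$ of $A_0 \cdots A_k$. Multiplying on the right by $A_k = \SmallMatrix{a_k}{(-1)^{k+1}}{(-1)^k}{0}$ shows that $d_k = (-1)^{k+1} c_{k-1}$ and $c_k = a_k c_{k-1} + c_{k-2}$, with the initial data $c_0 = 1$, $c_{-1} = 0$ read off from $A_0 = \SmallMatrix{a_0}{-1}{1}{0}$. In particular $\sgn(d_k) = (-1)^{k+1}\sgn(c_{k-1})$, so that $\sgn(c_k d_k) = (-1)^{k+1}\sgn(c_k c_{k-1})$, and the whole claim reduces to showing $\sgn(c_k c_{k-1}) = \sgn(a_k)$.

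Next I would prove by induction that $|c_k| \geq |c_{k-1}|$ for all $k \geq 1$. The base case is $|c_1| = |a_1| \geq 1 = |c_0|$ (using $a_1 \neq 0$), and the inductive step uses the triangle inequality with $|a_k| \geq 2$, giving $|c_k| \geq |a_k|\,|c_{k-1}| - |c_{k-2}| \geq 2|c_{k-1}| - |c_{k-1}| = |c_{k-1}|$. Since $|c_0| = 1$, this also forces $c_k \neq 0$ for every $k$. With this monotonicity the sign identity follows from the expansion $c_k c_{k-1} = a_k c_{k-1}^2 + c_{k-2} c_{k-1}$: for $k \geq 2$ the leading term dominates, since $|a_k c_{k-1}^2| \geq 2 c_{k-1}^2 > c_{k-1}^2 \geq |c_{k-1}|\,|c_{k-2}|$, whence $\sgn(c_k c_{k-1}) = \sgn(a_k c_{k-1}^2) = \sgn(a_k)$; the case $k = 1$ is immediate from $c_1 c_0 = a_1$.

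Finally, substituting $\sgn(c_k d_k) = (-1)^{k+1}\sgn(a_k)$ into Proposition \ref{eiscontfrac} turns its last term into $3\sum_{k=1}^n (-1)^{k+1}\sgn(a_k) = -3\sum_{k=1}^n (-1)^k \sgn(a_k)$, which is exactly the claimed formula. The main obstacle is the sign-domination step: I must ensure the leading term $a_k c_{k-1}^2$ strictly outweighs $c_{k-2} c_{k-1}$, which is precisely where the hypotheses $|a_k| \geq 2$ (for $k \geq 2$) and the monotonicity $|c_{k-1}| \geq |c_{k-2}|$ are needed. The index $k = 1$, where only $a_1 \neq 0$ is assumed and this slack could fail, is exactly the case handled separately by the direct computation $c_1 c_0 = a_1$.
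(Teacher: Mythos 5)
Your proof is correct and takes essentially the same route as the paper: both deduce the statement from Proposition \ref{eiscontfrac} by proving the sign identity $\sgn(c_k d_k) = (-1)^{k+1}\sgn(a_k)$ through an induction on the bottom-row entries of the partial products, with $\vert a_k \vert \geq 2$ forcing the magnitude growth ($\vert c_k \vert \geq \vert c_{k-1} \vert > 0$ in your notation, $0 < \vert d_k \vert \leq \vert c_k \vert$ in the paper's, which are the same statement since $d_k = (-1)^{k+1} c_{k-1}$). The only cosmetic difference is bookkeeping: you eliminate $d_k$ and track the product sign $\sgn(c_k c_{k-1})$ via the recurrence $c_k = a_k c_{k-1} + c_{k-2}$, whereas the paper carries $\sgn(c_k) = \sgn(a_1 \cdots a_k)$ and $\sgn(d_k) = (-1)^{k+1}\sgn(a_1 \cdots a_{k-1})$ separately.
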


\begin{proof}
We show by induction that $0 < \vert d_k \vert \leq \vert c_k \vert$, $\sgn(c_k) = \sgn(a_1 \cdots a_k)$ and $\sgn(d_k) = (-1)^{k + 1} \sgn(a_1 \cdots a_{k - 1})$ for all $k = 1, ..., n$, which together with Proposition \ref{eiscontfrac} suffices to prove the claim.

Suppose $n = 1$. Then $A_0A_1 = \SmallMatrix{*}{*}{a_1}{1}$, and we see that $\vert d_1 \vert = 1 \leq \vert a_1 \vert = \vert c_1 \vert$, $\sgn(c_1) = \sgn(a_1)$ and $\sgn(d_1) = 1$. 

Suppose the claim holds for $k - 1 < n$. Then $$\SmallMatrix{*}{*}{c_{k - 1}}{d_{k - 1}} \SmallMatrix{a_k}{(-1)^{k + 1}}{(-1)^k}{0} = \SmallMatrix{*}{*}{c_{k - 1} a_k + (-1)^k d_k}{(-1)^{k + 1}c_{k - 1}} = \SmallMatrix{*}{*}{c_k}{d_k},$$
which shows that $\sgn(d_k) = (-1)^{k + 1} \sgn(c_{k - 1}) = (-1)^{k + 1} \sgn(a_1 \cdots a_{k - 1})$. Also, observing that $c_k = \sgn(a_k) \sgn(c_{k - 1}) (\vert a_k c_{k - 1} \vert \pm d_{k - 1})$ gives that $\sgn(c_k) = \sgn(a_1 \cdots a_k)$, since $$\vert a_k c_{k - 1} \vert \pm d_{k - 1} \geq 2 \vert c_{k - 1} \vert - \vert d_{k - 1} \vert \geq \vert c_{k - 1} \vert = \vert d_k \vert > 0.$$
\end{proof}

\subsection{Representation of Hardy Sums as Sums of Partial Quotients.} 

The argument in section \ref{dederepsec} for $\log \eta(z)$ applies to $\log \theta(z)$ and $\log \theta_4(z)$ as well. Let $L(z) = 2E_2(z) - E_2\left( \frac{z}{2} \right)$, which is a modular form of weight $2$ for $\G^0(2)$. It is easy to see that it is invariant under the $\vert_2$-operator for $\G^0(2)$ using $\SmallMatrix{a}{b}{2c}{d}.\frac{z}{2} = \frac{1}{2} \SmallMatrix{a}{2b}{c}{d}.z$. The modular form $L(z)$ is the only modular form of weight $2$ for $\G^0(2)$ up to scalar multiplication. Its Fourier-expansion is \begin{equation} \label{fourierL}
    L(z) = 1 + 24 \sum_{n = 1}^\infty \sigma_1^\mathrm{odd}(n) e^{\pi i n z},
\end{equation}
where $\sigma_1^\mathrm{odd}(n) = \sum_{d \vert n, \; d \; \mathrm{odd}} d$. 

The space of modular forms of weight $2$ for $\G_\theta$ is also one-dimensional and its basis element is given by $$R(z) = L(z + 1) = 1 + 24 \sum_{n = 1}^\infty (-1)^n \sigma_1^\mathrm{odd}(n) e^{\pi i n z}.$$

It follows from the Jacobi Triple Product Identity that \begin{equation} \label{exptheta}
    \theta(z) = e\left( \frac{1}{24} \int_\infty^z (E_2(w) - R(w)) dw \right) \; \mathrm{and} \; \theta_4(z) = e \left( \frac{1}{24} \int_\infty^z (E_2(w) - L(w)) dw \right).
\end{equation}
Note that due to the branch cut of the complex logarithm we cannot simply take the logarithm on both sides of (\ref{exptheta}) to write the logarithm of $\theta$-functions as integrals over quasi-modular forms. However, we can take the logarithmic derivative. For example, the logarithmic derivative $\frac{\theta_4'(z)}{\theta_4(z)}$ is equal to $\frac{\pi i}{12} \left( E_2(z) - L(z) \right)$. 

Similar to the case of $\log \eta(z)$, we define $\phi(A) = \log \theta_4(A.z) - \log \theta_4(z) - \frac{\pi i}{12} \displaystyle\int_z^{A.z} E_2(w) dw$. By differentiation in $z$ it easy to see that the expression $\phi(A)$ is constant in $z$ and thus that $\phi$ is a group-homomorphism $\G^0(2) \to (\C, +)$. Similarly, one shows that for $B \in \G_\theta$ the function $\psi(B) = \log \theta(B.z) - \log \theta(z) - \frac{\pi i}{12} \displaystyle\int_z^{B.z} E_2(w) dw$ is a well-defined group-homomorphism $\G_\theta \to (\C, +)$.

Unlike $\nu : \SL \to (\C, +)$ in (\ref{nudefi}), the group-homomorphisms $\phi$, $\psi$ need not be identically zero anymore, as their domain is a subgroup of $\SL$ and not the full modular group itself. However, the vector-spaces of group-homomorphisms $\G^0(2) \to (\C, +)$ and $\G_\theta \to (\C, +)$ are one-dimensional, which allows us to determine the form of $\phi$ and $\psi$ precisely.

\begin{lem} \label{grouplem}
\begin{enumerate}
    \item There is a unique group-homomorphism $\phi: \G^0(2) \to (\C, +)$, up to scalar multiplication, given by $$\phi(A) = \int_z^{A.z} L(w) dw, \; A \in \G^0(2).$$
    The homomorphism $\phi$ acts on the generators $T^2, V$ of $\G^0(2)$ by $\phi(T^2) = 2$ and $\phi(V) = 2$. 
    
    \item There is a unique group-homomorphism $\psi : \G_\theta \to (\C, +)$, up to scalar multiplication, given by $$\psi(B) = \int_z^{B.z} R(w) dw, \; B \in \G_\theta.$$
    The homomorphism $\psi$ acts on the generators $T^2, S$ of $\G_\theta$ by $\psi(T^2) = 2$ and $\psi(S) = 0$. 
\end{enumerate}
\end{lem}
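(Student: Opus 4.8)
The plan is to prove both parts by a single template: treat the weight-$2$ form ($L$ for $\G^0(2)$, $R$ for $\G_\theta$) as the source of a period map, and exploit that the second defining generator of each group is rigidified by a torsion relation. First I would check that $\phi(A)=\int_z^{A.z}L(w)\,dw$ is independent of the base point $z$ and is a homomorphism. Since $L$ is holomorphic on $\uh$, the integral is path-independent and a function of its endpoints; differentiating in $z$ gives $L(A.z)(cz+d)^{-2}-L(z)$, which vanishes by the weight-$2$ $|_2$-invariance of $L$ under $\G^0(2)$ recorded above, so $\phi(A)$ depends only on $A$. The homomorphism property is then the one-line path splitting
\[
\phi(AB)=\int_z^{B.z}L(w)\,dw+\int_{B.z}^{A.(B.z)}L(w)\,dw=\phi(B)+\phi(A),
\]
the second integral being $\phi(A)$ by the base-point independence just shown. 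The identical argument with $R$ and $\G_\theta$ handles $\psi$.

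Next I would evaluate the generators. For $\phi(T^2)=\int_z^{z+2}L(w)\,dw$ I would substitute the Fourier series (\ref{fourierL}): the constant term contributes $2$, and every term $e^{\pi i n w}$ integrates to $0$ over a horizontal segment of length $2$ because $e^{2\pi i n}=1$; hence $\phi(T^2)=2$, and the same computation gives $\psi(T^2)=2$. The value $\phi(V)$ is the one I would not attack head-on: because $V$ does not act by a horizontal translation (indeed $V.z=z/(z+1)$ fixes the cusp $0$), the Fourier orthogonality that handled $T^2$ fails, and a direct evaluation would require the expansion of $L$ at the cusp $0$. Instead I would note that $M=T^2V^{-1}=\SmallMatrix{-1}{2}{-1}{1}$ has trace $0$, so $M^2=-I$ and $M^4=I$; since $(\C,+)$ is torsion-free, $\phi(M)=0$, i.e. $\phi(T^2)-\phi(V)=0$, giving $\phi(V)=2$. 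For $\psi$ the generator $S$ is itself torsion ($S^4=I$), so $\psi(S)=0$ by the same token.

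It remains to see that these period maps span the full space of homomorphisms, i.e. $\dim_\C\mathrm{Hom}(\G^0(2),\C)\le 1$ and $\dim_\C\mathrm{Hom}(\G_\theta,\C)\le 1$; together with $\phi(T^2)=\psi(T^2)=2\neq 0$ this pins each space down to one dimension and forces the stated formulas up to scaling. Both bounds come from the same torsion relations: for $\G_\theta=\langle T^2,S\rangle$ any homomorphism kills $S$ and is thus determined by its value on $T^2$, while for $\G^0(2)=\langle T^2,V\rangle$ any homomorphism satisfies $\chi(V)=\chi(T^2)$ via $M=T^2V^{-1}$, so is again determined by $\chi(T^2)$; in each case $\chi\mapsto\chi(T^2)$ embeds the Hom-space into $\C$. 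I expect the elliptic relation $M=T^2V^{-1}$ of order $4$ to be the real crux of the argument, since it simultaneously computes $\phi(V)$ and caps the dimension, sidestepping the cusp-$0$ analysis a direct approach would demand; the base-point independence and the $T^2$-integral are routine once the weight-$2$ modularity of $L$ and $R$ is in hand.
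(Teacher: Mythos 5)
Your proposal is correct and takes essentially the same approach as the paper: base-point independence and the homomorphism property via the weight-$2$ invariance of $L$ and $R$, the evaluation $\phi(T^2)=\psi(T^2)=2$ from the Fourier expansions, and the same key torsion relation --- your $(T^2V^{-1})^2=-I$ is precisely the paper's identity $T^2V^{-1}T^2=-V$ --- used both to force $\phi(V)=\phi(T^2)$ (resp. $\psi(S)=0$) and to show each Hom-space is one-dimensional. The paper merely packages the torsion argument differently, first deriving $\phi_0(-A)=\phi_0(A)$ from $(-A)^2=A^2$ and then applying the relation, which is the same idea as your appeal to the torsion-freeness of $(\C,+)$.
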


\begin{proof}
Let $A \in \G^0(2)$. As $L(z)$ is invariant under the $\vert_2$-action of $\G^0(2)$, the integral $\phi(A) = \displaystyle\int_z^{A.z} L(w) dw$ is zero under differentiation by $z$, hence independent on $z$. It is easily checked that $\phi$ is indeed a group-homomorphism. Furthermore, it is clear by the Fourier expansion of $L(z)$ in (\ref{fourierL}) that $\phi(T^2) = \displaystyle\int_z^{T^2.z} L(w) dw = 2$. 

Now suppose that $\phi_0$ is any group homomorphism $\G^0(2) \to (\C, + )$. Since $$2\phi_0(-A) = \phi_0((-A)^2) = \phi_0(A^2) = 2\phi_0(A) \; \; \mathrm{for \; all} \; A \in \G^0(2),$$ we have $\phi_0(-A) = \phi_0(A)$. 

Note that $T^2 V^{-1} T^2 = -V$. Hence, $$\phi_0(V) = \phi_0(-V) = \phi_0(T^2 V^{-1} T^2) = 2\phi_0(T^2) - \phi_0(V),$$
which implies that $\phi_0(T^2) = \phi_0(V)$. This proves that any group-homomorphism is uniquely determined by its value $\phi_0(T^2)$, hence up to a constant equal to $\phi(A)$.\footnote{Another way to find the value of $\displaystyle\int_z^{V.z} L(w) dw$ would be to note that $V.(-1 + i) = 1 + i$. Since the integral is constant in $z$, we may well specialise to the point $z = -1 + i$ to get $\displaystyle\int_{-1 + i}^{1 + i} L(w) dw = 2$.}

The proof for the subgroup $\G_\theta$ is similar, noting that $\psi(S^2) = \psi(-I) = 0$ and hence $\psi(S) = 0$. 
\end{proof}

With Lemma \ref{grouplem}, we may easily prove the analog of (\ref{etaeis}) for the logarithms of the $\theta$-functions. 

\begin{prop} \label{thetaeis}
For $A \in \G^0(2)$ the logarithm of $\theta_4(z)$ satisfy \begin{equation} \label{jacobidentity}
    \log \theta_4(A.z) - \log \theta_4(z) = \frac{\pi i}{12} \int_z^{A.z} (E_2(w) - L(w)) dw.
\end{equation}

For $B \in \G_\theta$, the logarithm of $\theta (z)$ satisfy $$\log \theta(B.z) - \log\theta(z) = \frac{\pi i}{12} \displaystyle\int_z^{B.z} (E_2(w) - R(w))dw.$$ 
\end{prop}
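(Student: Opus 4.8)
The plan is to exploit the one-dimensionality of the relevant spaces of group-homomorphisms established in Lemma \ref{grouplem}, so that the proposition reduces to computing a single proportionality constant. I focus on the $\theta_4$ statement (\ref{jacobidentity}); the $\theta$ statement is identical after replacing $\G^0(2)$, $L$ by $\G_\theta$, $R$. As recorded just before Lemma \ref{grouplem}, the assignment
$$
\phi_0(A) := \log\theta_4(A.z) - \log\theta_4(z) - \frac{\pi i}{12}\int_z^{A.z} E_2(w)\,dw
$$
is independent of $z$ and defines a group-homomorphism $\G^0(2)\to(\C,+)$. By Lemma \ref{grouplem}(1) the space of such homomorphisms is one-dimensional and is spanned by $A\mapsto\int_z^{A.z}L(w)\,dw$, which is nonzero since its value at $T^2$ is $2$. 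Hence there is a constant $c\in\C$ with $\phi_0(A)=c\int_z^{A.z}L(w)\,dw$ for every $A\in\G^0(2)$, and it suffices to pin down $c$.

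First I would evaluate both homomorphisms on the generator $T^2$. On the one hand $\int_z^{T^2.z}L(w)\,dw = 2$ by Lemma \ref{grouplem}(1). On the other hand I compute $\phi_0(T^2)$ directly: since it does not depend on $z$, I let $z\to i\infty$. There $\theta_4(z)\to 1$ by (\ref{theta4defi}), so $\log\theta_4(z+2)-\log\theta_4(z)\to 0$; and from the Fourier expansion (\ref{fouriere2}) each nonconstant mode $e(nw)$ integrates to zero over the period from $z$ to $z+2$, whence $\int_z^{T^2.z}E_2(w)\,dw = 2$ exactly. Thus $\phi_0(T^2)=-\frac{\pi i}{12}\cdot 2=-\frac{\pi i}{6}$, which forces $c=-\frac{\pi i}{12}$. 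Substituting $\phi_0(A)=-\frac{\pi i}{12}\int_z^{A.z}L(w)\,dw$ into the definition of $\phi_0$ and rearranging yields exactly (\ref{jacobidentity}).

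For the $\theta$ case I would run the same argument with $\psi_0(B):=\log\theta(B.z)-\log\theta(z)-\frac{\pi i}{12}\int_z^{B.z}E_2(w)\,dw$, which lies in the one-dimensional space $\mathrm{Hom}(\G_\theta,(\C,+))$ spanned by $B\mapsto\int_z^{B.z}R(w)\,dw$ by Lemma \ref{grouplem}(2). Evaluating at $T^2\in\G_\theta$ gives $\int_z^{T^2.z}R(w)\,dw=2$, $\theta(z)\to 1$ as $z\to i\infty$ by (\ref{thetadefi}), and again $\int_z^{T^2.z}E_2(w)\,dw=2$, so the same constant $-\frac{\pi i}{12}$ appears. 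The only genuinely delicate point is the evaluation of $\phi_0(T^2)$ and $\psi_0(T^2)$: one must ensure the branch cut of the logarithm causes no trouble, which is why I take the limit $z\to i\infty$, where $\theta_4$ and $\theta$ approach the positive real value $1$ and the logarithm terms vanish unambiguously, rather than arguing pointwise. Everything else is a routine proportionality argument backed by the one-dimensionality from Lemma \ref{grouplem}.
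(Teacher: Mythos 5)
Your proposal is correct and follows essentially the same route as the paper: reduce to the one-dimensionality of the space of homomorphisms $\G^0(2) \to (\C, +)$ (resp. $\G_\theta \to (\C,+)$) from Lemma \ref{grouplem}, then determine the proportionality constant by evaluating at the generator $T^2$, exactly as the paper does in its (terser) proof. Your extra care with the logarithm's branch via the limit $z \to i\infty$ is harmless but not needed, since $\theta_4(z+2) = \theta_4(z)$ exactly, so the logarithm terms cancel identically for every $z$.
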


\begin{proof}
Let $A \in \G^0(2)$. As we already saw, the function $$\phi(A) = \log \theta_4(A.z) - \log \theta_4(z) - \frac{\pi i}{12} \int_z^{A.z} E_2(w) dw$$ is independent of $z$ and a group homomorphism $\G^0(2) \to (\C, +)$. By Lemma \ref{grouplem} it is thus equal to a scalar multiple of $\displaystyle\int_z^{A.z} L(w) dw$. Setting $A = T^2$, we see that $$\phi(A) = - \frac{\pi i}{12} \int_z^{A.z} L(w) dw.$$

The proof for $\log \theta$ is similar.
\end{proof}

As seen in section \ref{contfracsec}, the representation of matrices in the groups $\G^0(2)$ and $\G_\theta$ leads to c.f.e. of a certain type. We may use Propositions \ref{eiscontfrac} and \ref{thetaeis} to express $\log \theta_4(A.z) - \log \theta_4(z)$ in terms of the matrix representation of $A = \SmallMatrix{a}{2b}{c}{d}$ as product of the generators $T^2$, $V$ in $\G^0(2)$. 

More precisely, suppose $c > 0$ be an integer and that $-c < d < c$ an odd integer coprime to $c$. It thus has a c.f.e. of $\G^0(2)$-type $\frac{d}{c} = [0; a_1, 2a_2, ..., 2a_{n - 1}, a_n]$ as in Proposition \ref{contfracg2}, i.e. where $a_1, a_2, ..., a_n$ are non-zero integers such that $\vert a_1 \vert, \vert a_3 \vert, ..., \vert a_{n - 2} \vert > 1$. Note that $- \frac{d}{c} = [0; -a_1, -2a_2, ..., -2a_{n - 1}, -a_n]$ and hence $\SmallMatrix{d}{*}{-c}{*} = \pm V^{-a_1} T^{-2a_2} \cdots T^{-2a_{n - 1}} V^{-a_n}$ is a matrix in $\G^0(2)$. Its inverse $\SmallMatrix{*}{*}{c}{d} = \pm V^{a_n} T^{2a_{n - 1}} \cdots T^{2a_2} V^{a_1}$ then satisfies the assumptions on the partial quotients in Proposition \ref{comptermlem}. 

\begin{prop} \label{thetacontfrac}
\begin{enumerate}
    \item Let $n$ be an odd integer and $A = \SmallMatrix{*}{*}{c}{d} = T^{2a_0} V^{a_1} T^{2a_2} \cdots V^{a_n} \in \G^0(2)$ with $a_1, ..., a_n$ being non-zero integers and $\vert a_k \vert > 1$ for $k > 1$ an odd integer. The cocycle $\displaystyle\int_z^{A.z} (E_2(w) - L(w)) dw$ is equal to $$\frac{6}{\pi i} \log \left( \frac{cz + d}{\sgn(c) i} \right) - 3(a_1 + a_3 + ... + a_n) - 3 \sum_{k = 1}^n (-1)^k \sgn(a_k).$$   
    \item Let $n$ be an integer and $A = \SmallMatrix{*}{*}{c}{d} = T^{2c_0} S T^{2c_1} S \cdots T^{2c_n} S \in \G_\theta$ with $c_1, ..., c_n$ being non-zero integers. The cocycle is given by $$\displaystyle\int_z^{A.z} (E_2(w) - R(w)) dw = \frac{6}{\pi i} \log \left( \frac{cz + d}{\sgn(c) i} \right) - 3 \sum_{k = 1}^n \sgn(c_k).$$
\end{enumerate}
\end{prop}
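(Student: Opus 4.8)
The plan is to treat both cocycles by the same device. By Lemma \ref{grouplem} the integrals of $L$ and $R$ are precisely the homomorphisms appearing there, so I would write $\int_z^{A.z}(E_2 - L)\,dw = \int_z^{A.z}E_2(w)\,dw - \phi(A)$ and $\int_z^{A.z}(E_2 - R)\,dw = \int_z^{A.z}E_2(w)\,dw - \psi(A)$. Because $\phi$ and $\psi$ are homomorphisms with prescribed values on generators, the correction terms are immediate: from $\phi(T^2) = \phi(V) = 2$ and $A = T^{2a_0}V^{a_1}\cdots V^{a_n}$ one gets $\phi(A) = 2\sum_{k=0}^n a_k$, while $\psi(T^2) = 2$, $\psi(S) = 0$ together with $A = T^{2c_0}S\cdots T^{2c_n}S$ give $\psi(A) = 2\sum_{k=0}^n c_k$ (the $S$-factors contribute nothing). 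Hence the only substantial input is the value of $\int_z^{A.z}E_2$, which I would read off from Proposition \ref{comptermlem} after rewriting $A$ as a product $A_0\cdots A_n$ of the matrices $A_k = \SmallMatrix{b_k}{(-1)^{k+1}}{(-1)^k}{0}$.

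For part (1) this rewriting is immediate from the remark following Proposition \ref{eiscontfrac}: the word $A = T^{2a_0}V^{a_1}T^{2a_2}\cdots V^{a_n}$ is exactly $A_0\cdots A_n$ with $b_k = 2a_k$ for even $k$ and $b_k = a_k$ for odd $k$. I would first check the hypotheses of Proposition \ref{comptermlem}: $b_1 = a_1 \neq 0$, while for even $k\geq 2$ one has $|b_k| = 2|a_k| \geq 2$ and for odd $k\geq 3$ the standing assumption $|a_k| > 1$ gives $|b_k| = |a_k| \geq 2$. Proposition \ref{comptermlem} then yields $\int_z^{A.z}E_2 = \frac{6}{\pi i}\log\!\bigl(\frac{cz+d}{\sgn(c)i}\bigr) + \sum_{k=0}^n (-1)^k b_k - 3\sum_{k=1}^n (-1)^k \sgn(b_k)$. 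Since $\sgn(b_k) = \sgn(a_k)$, and splitting $\sum(-1)^k b_k$ into even and odd indices, subtracting $\phi(A) = 2\sum_{k=0}^n a_k$ collapses the $a$-terms to $-3(a_1 + a_3 + \cdots + a_n)$, which is the claimed formula.

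For part (2) the same scheme applies, but the word is built from $S$ rather than $V$, so a sign conversion is needed: since $T^{2c_k}S = \SmallMatrix{2c_k}{-1}{1}{0}$ equals $A_k$ when $k$ is even and $-A_k$ when $k$ is odd (with $b_k = (-1)^k 2c_k$ in both cases), one gets $A = \pm A_0\cdots A_n$. The overall sign is harmless, as $A$ and $-A$ induce the same M\"obius map and leave $\frac{cz+d}{\sgn(c)i}$ invariant, so Proposition \ref{comptermlem} may be applied to $A_0\cdots A_n$ directly. Here $|b_k| = 2|c_k| \geq 2$ for all $k \geq 1$, so the hypotheses hold; and because $\sgn(b_k) = (-1)^k\sgn(c_k)$ one finds $\sum(-1)^k b_k = 2\sum_{k=0}^n c_k$ and $\sum_{k=1}^n(-1)^k\sgn(b_k) = \sum_{k=1}^n \sgn(c_k)$. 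Subtracting $\psi(A) = 2\sum_{k=0}^n c_k$ cancels the $2\sum c_k$ and leaves $-3\sum_{k=1}^n \sgn(c_k)$, as desired.

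The genuine bookkeeping — and the place I expect to have to be most careful — is the conversion of the generator words into the normalized product $A_0\cdots A_n$ and the attendant signs, in particular the identification $T^{2c_k}S = (-1)^k A_k$ in part (2) and the check that $\frac{cz+d}{\sgn(c)i}$ is unchanged under the overall $\pm$. A second point to watch is the parity of $n$: Propositions \ref{eiscontfrac} and \ref{comptermlem} are stated for odd length, so for part (2) with $n$ even I would either record the corresponding even-length version of the $E_2$-formula (proved by the same induction as in the appendix) or peel off one factor and treat the residual term by hand; note that the homomorphism value $\psi(A) = 2\sum c_k$ is insensitive to the parity of $n$ in any case.
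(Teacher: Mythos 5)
Your proposal is correct and follows essentially the same route as the paper: both rewrite the generator word as a product of the matrices $A_k$, evaluate $\displaystyle\int_z^{A.z} E_2(w)\,dw$ via Proposition \ref{comptermlem}, and subtract the homomorphism values $\phi(A)$, $\psi(A)$ supplied by Lemma \ref{grouplem}, with the identical $\pm$-sign bookkeeping for the $S$-word in part (2). Your write-up is in fact more complete than the paper's two-line proof; in particular, the even-$n$ parity issue you flag in part (2) is real and is passed over silently in the paper (the appendix induction does go through for even length, so your proposed fix works).
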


\begin{proof}
For the first part, recall that we may write $T^{2a_k} V^{a_{k + 1}} = T^{2a_k} \SmallMatrix{0}{-1}{1}{0} \SmallMatrix{0}{1}{-1}{0} V^{a_{k + 1}} = \SmallMatrix{2a_k}{-1}{1}{0} \SmallMatrix{a_{k + 1}}{1}{-1}{0}$ for $k = 0, ..., n - 1$. The claim then follows by Proposition \ref{comptermlem} and Lemma \ref{grouplem}.

For the second part, write $\pm A = T^{2c_0} S T^{2c_1} S^{-1} \cdots T^{2c_n} S^{(-1)^n}$ and note that $T^{2c_k} S^{(-1)^k} = \SmallMatrix{(-1)^k 2 c_k}{(-1)^{k + 1}}{(-1)^k}{0}$, i.e. $$\pm A = \SmallMatrix{2c_0}{-1}{1}{0} \SmallMatrix{-2c_1}{1}{-1}{0} \cdots \SmallMatrix{(-1)^n 2c_n}{(-1)^{n + 1}}{(-1)^n}{0}.$$ 
Since $\displaystyle\int_z^{A.z} (E_2(w) - R(w)) dw = \displaystyle\int_z^{(-A).z} (E_2(w) - R(w)) dw$, we may again use Proposition \ref{comptermlem} and Lemma \ref{grouplem} to prove the claim. 
\end{proof}

With Proposition \ref{thetacontfrac} we wrote the cocycle of the logarithm of the $\theta$-functions in terms of the matrix representation as words in $T^2, \; S$ resp. $T^2,\; V$ (i.e. in c.f.e. of $\G_\theta$- resp. $\G^0(2)$-type). Theorem \ref{mainthm} immediately follows from the next proposition, which collects our previous results to give the representation of the Hardy sums in terms of the partial quotients of c.f.e. of $\G_\theta$- resp. $\G^0(2)$-type. 

\begin{prop} \label{proofprop} 
\begin{enumerate}
    \item Let $d, c$ be coprime integers of opposite parity with $c > 0$ and assume that $-c < d < c$. Pick $a, b \in \Z$ such that $ad - bc = 1$. Pick $c_0 \in \Z$ and non-zero integers $c_1, ..., c_n$ with $\SmallMatrix{a}{b}{c}{d} = \pm T^{2c_0} S T^{2c_1} S \cdots T^{2c_n} S$. The Hardy sum $S(d, c)$ can be evaluated as $$S(d, c) = - \sum_{k = 1}^n \sgn(c_k).$$

    \item Let $c > 0$ and $d$ be an odd integer coprime to $c$ such that $-c < d < c$ . Pick $a, b \in \Z$ such that $ad - 2bc = 1$. Pick $a_0 \in \Z$ and non-zero integers $a_1, ..., a_n$ such that $\vert a_k \vert > 1$ for $k > 1$ an odd integer and $\SmallMatrix{a}{2b}{c}{d} = T^{2a_0} V^{a_1} \cdots T^{2a_{n - 1}} V^{a_n}$. The Hardy sum $S_4(d, c)$ can be evaluated as $$S_4(d, c) = a_1 + a_3 + ... + a_n + \sum_{k = 1}
   ^n (-1)^k \sgn(a_k).$$
\end{enumerate}
\end{prop}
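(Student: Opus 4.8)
The plan is to compute the automorphy quantity $\log\theta_4(A.z)-\log\theta_4(z)$ (resp.\ $\log\theta(B.z)-\log\theta(z)$) in two different ways and to compare the outcomes. On one side stands Berndt's transformation law (\ref{theta4trans}) (resp.\ (\ref{thetatrans})), which encodes exactly the Hardy sum we are after; on the other side stands the combination of Proposition \ref{thetaeis} with the explicit evaluation of the cocycle integral in Proposition \ref{thetacontfrac}, which encodes the partial quotients. Since both compute the same function of $z$, equating them and cancelling the common branch-of-logarithm term will isolate the Hardy sum and produce the stated formula.

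I would treat the second part first. The matrix $\SmallMatrix{a}{2b}{c}{d}$ has even upper-right entry, hence lies in $\G^0(2)$, and by hypothesis it is written as $T^{2a_0}V^{a_1}\cdots V^{a_n}$ with partial quotients meeting the constraints of the first part of Proposition \ref{thetacontfrac}. By Proposition \ref{thetaeis},
\[
\log\theta_4(A.z)-\log\theta_4(z)=\frac{\pi i}{12}\int_z^{A.z}(E_2(w)-L(w))\,dw,
\]
and, using $c>0$ so that $\sgn(c)=1$, the first part of Proposition \ref{thetacontfrac} evaluates the right-hand side to
\[
\frac{\pi i}{12}\left(\frac{6}{\pi i}\log\left(\frac{cz+d}{i}\right)-3(a_1+a_3+\cdots+a_n)-3\sum_{k=1}^n(-1)^k\sgn(a_k)\right).
\]
The prefactors collapse via $\frac{\pi i}{12}\cdot\frac{6}{\pi i}=\frac12$ and $\frac{\pi i}{12}\cdot(-3)=-\frac{\pi i}{4}$, yielding $\frac12\log\left(\frac{cz+d}{i}\right)-\frac{\pi i}{4}\big((a_1+a_3+\cdots+a_n)+\sum_{k=1}^n(-1)^k\sgn(a_k)\big)$. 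Setting this equal to Berndt's expression (\ref{theta4trans}), which is $\frac12\log\left(\frac{cz+d}{i}\right)-\frac{\pi i}{4}S_4(d,c)$, the logarithmic terms cancel and dividing by $-\frac{\pi i}{4}$ gives the claimed evaluation of $S_4(d,c)$.

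The first part is entirely analogous: $\SmallMatrix{a}{b}{c}{d}$ can be arranged (by the choice of $a,b$ among the solutions of $ad-bc=1$) to lie in $\G_\theta$, and Proposition \ref{thetaeis} together with the second part of Proposition \ref{thetacontfrac} gives $\log\theta(A.z)-\log\theta(z)=\frac12\log\left(\frac{cz+d}{i}\right)-\frac{\pi i}{4}\sum_{k=1}^n\sgn(c_k)$; comparing with Berndt's law (\ref{thetatrans}), which carries the term $+\frac{\pi i}{4}S(d,c)$, forces $S(d,c)=-\sum_{k=1}^n\sgn(c_k)$. Because all of the analytic substance has already been absorbed into Propositions \ref{thetaeis} and \ref{thetacontfrac}, no genuine obstacle remains; the only points demanding care are bookkeeping ones, and I expect them to be the main source of potential error. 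Namely, one must confirm that the chosen matrix actually lies in the relevant subgroup so that both Berndt's law and the generator-word evaluation are legitimate, use $c>0$ to pin the branch ($\sgn(c)=1$) so the two logarithm terms match identically, and above all track the opposite sign conventions in the two transformation laws — (\ref{thetatrans}) carries $+\frac{\pi i}{4}S$ whereas (\ref{theta4trans}) carries $-\frac{\pi i}{4}S_4$ — so that the final coefficients emerge exactly as stated.
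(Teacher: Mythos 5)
Your proposal is correct and follows essentially the same route as the paper's own proof: both equate Berndt's transformation laws (\ref{thetatrans}), (\ref{theta4trans}) with the combination of Proposition \ref{thetaeis} and Proposition \ref{thetacontfrac}, cancel the common logarithm term (using $c>0$ to fix the branch), and read off the Hardy sum from the remaining coefficients. Your added bookkeeping remarks (subgroup membership, the opposite signs $+\frac{\pi i}{4}S$ versus $-\frac{\pi i}{4}S_4$, and the prefactor arithmetic) are exactly the details the paper's two-line computation absorbs implicitly.
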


\begin{proof}
To prove (1), note that (\ref{thetatrans}), Proposition \ref{thetaeis} and Proposition \ref{thetacontfrac} imply that \begin{align*} 
    \frac{1}{2} \log \left(\frac{c z + d}{i} \right) + \frac{\pi i}{4} S(d, c) &= \frac{\pi i}{12} \int_z^{A.z} (E_2(w) - R(w)) dw \\
    &=\frac{1}{2} \log \left( \frac{c z + d}{i} \right) - \frac{\pi i}{4} \sum_{k = 1}^n \sgn(c_k).
\end{align*}

To prove (2), use (\ref{theta4trans}), Proposition \ref{thetaeis} and Proposition \ref{thetacontfrac} to obtain \begin{align*}
    &\frac{1}{2} \log \left( \frac{cz + d}{i} \right) - \frac{\pi i}{4} S_4(d, c) \\
    &= \frac{\pi i}{12} \int_{z}^{A.z} (E_2(w) - L(w)) dw \\
    &= \frac{1}{2} \log \left( \frac{cz + d}{i} \right) - \frac{\pi i}{4} (a_1 + a_3 + ... + a_n) - \frac{\pi i}{4} \sum_{k = 1}^n (-1)^k \sgn(a_k).
\end{align*}
\end{proof}

\section{Density of Hardy Sums} \label{densitysec}

In this section, we prove Meyer's Theorem \ref{meyerthm} that the sets $\{ (d / c, \; S(d, c)) : c + d \; \mathrm{odd}, c > 0, \; (d, c) = 1 \}$ and $\{ (d / c, \; S_4(d, c)) : c > 0, \; d \; \mathrm{odd}, \; (d, c) = 1 \}$ are dense in $\R \times \Z$, as well as the new density Theorem \ref{meyerthm2}.

The arguments to prove the claim are quite short, but differ between $S(d, c)$ and $S_4(d, c)$. We will present them in separate subsections. 

\subsection{Proof of the Density of the Hardy Sums $S(d, c)$} \label{densityS}

Let $(x, m) \in \R \times \Z$ and $\varepsilon > 0$. We need to show that there are coprime integers $d, c$ with $c > 0$ and $d + c$ odd such that $$\left\vert x - \frac{d}{c} \right\vert < \varepsilon \; \mathrm{and} \; S(d, c) = m.$$ Since $S(d + 2c, c) = S(d, c)$, we may assume w.l.o.g. that $x \in (-1, 1)$. 

By Proposition \ref{apprprop}, we may find non-zero integers $c_1, c_2, ..., c_n$ such that $\frac{p_n}{q_n} = \llbracket 2c_1, ..., 2c_n \rrbracket$ lies in $(x - \frac{\varepsilon}{2}, x + \frac{\varepsilon}{2})$. Pick $M \in \Z$ such that $M + m = S(p_n, q_n)$. If $M = 0$, there is nothing more to show. Thus assume that $M \neq 0$. 

Now construct the following rational in $(x - \varepsilon, x + \varepsilon)$: Pick $c_{n + 1} \geq 1$ big enough such that $$\left\vert \llbracket 2c_1, ..., 2c_n, \sgn(M) 2c_{n + 1} \rrbracket - \frac{p_{n}}{q_{n}} \right\vert \leq \frac{1}{\vert q_{n} \vert (2 \vert c_{n + 1} \vert \vert q_{n} \vert - \vert q_{n - 1}\vert)} < \frac{\varepsilon}{2 \vert M \vert},$$
where we made use of Lemma \ref{lemconv}. Denote $\llbracket 2c_1, ..., 2c_n, \sgn(M) 2c_{n + 1},\rrbracket$ by $\frac{p_{n + 1}}{q_{n + 1}}$. Then pick $c_{n + 2} \geq 1$ such that $$\left\vert \llbracket 2c_1, ..., 2c_n, \sgn(M) 2c_{n + 1}, \sgn(M) 2c_{n + 2} \rrbracket - \frac{p_{n + 1}}{q_{n + 1}} \right\vert < \frac{\varepsilon}{2\vert M \vert}, \; \mathrm{etc.}$$

Continuing this process $\vert M \vert$-times, we get a rational $$\frac{p_{n + \vert M \vert}}{q_{n + \vert M \vert}} = \llbracket 2c_1, ..., 2c_n, 2 c_{n + 1}', ..., 2 c_{n + \vert M \vert}' \rrbracket,$$
where $c_k' = \sgn(M) c_k$ for $k = n + 1, ..., n + \vert M \vert$. In particular, $\sgn(c_k') = \sgn(M)$. 

The rational $\frac{p_{n + \vert M \vert}}{q_{n + \vert M \vert}}$ lies in $(x - \varepsilon, x + \varepsilon)$, as $$\left\vert x - \frac{p_{n + \vert M \vert}}{q_{n + \vert M \vert}} \right\vert \leq \left\vert x - \frac{p_{n}}{q_{n}} \right\vert + \sum_{k = 1}^{\vert M \vert} \left\vert \frac{p_{n + k - 1}}{q_{n + k - 1}} - \frac{p_{n + k}}{q_{n + k}} \right\vert < \varepsilon$$
by construction. Moreover, it follows from Theorem \ref{mainthm} that \begin{align*}
    S(p_{n + \vert M \vert}, q_{n + \vert M \vert}) &= - \sum_{k = 1}^n \sgn(c_k) - \sum_{l = 1}^{\vert M \vert} \sgn(c_{n + l}') \\
    &= m + M - \sgn(M) \vert M \vert \\
    &= m.
\end{align*} 

\subsection{Proof of the Density of the Hardy Sums $S_4(d, c)$ and of $S_4(d, c) + S(d, c)$} 

To prove Theorem \ref{meyerthm2}, we will prove a stronger result than $\{ (d / c, S(d, c) + S_4(d, c)) : d \; \mathrm{odd}, \; c > 0 \; \mathrm{even} \}$ being dense in $\R \times 2\Z$, namely that $$\{ (d / c, \; S(d, c) + S_4(d, c), \; S_4(d, c)) : d \; \mathrm{odd}, \; c > 0 \; \mathrm{even} \} \subset \R \times 2 \Z \times (2\Z + 1)$$ is a dense subset.

For $S(d, c) + S_4(d, c)$ to be well-defined, we need $\frac{d}{c}$ with $c > 0$ and $(d, c) = 1$ to be such that $\SmallMatrix{*}{*}{c}{d} \in \G_\theta \cap \G^0(2)$. In other words, $$\frac{d}{c} = \llbracket 2c_0; 2c_1, ..., 2c_n \rrbracket = [2c_0; -2c_1, 2c_2, -2c_3, ..., -2c_n]$$ should be such that $d$ is odd and $c + d$ is odd, i.e. $c$ is even. By Lemma \ref{categorizing}, this can only happen if $n$ is odd. 

In this case, the Hardy sums are \begin{align*}
    &S(d, c) = - \sum_{k = 1}^n \sgn(c_k) \; \mathrm{and} \; \\
    &S_4(d, c) = -2(c_1 + c_3 + ... + c_n) + \sum_{k = 1}^n \sgn(c_k) = -2(c_1 + c_3 + ... + c_n) - S(d, c)
\end{align*}
by Theorem \ref{mainthm}. It is clear that $S(d, c) + S_4(d, c) \equiv 0 \pmod 2$. With the next lemma, we can also deduce that $S_4(d, c)$ will have to be odd in this case.

\begin{lem} \label{lengthlem}
Let $c_1, ..., c_n$ be non-zero integers and $\frac{d}{c} = \llbracket 2c_1, ..., 2c_n \rrbracket$ for $(d, c) = 1$ and $c > 0$. Then $$S(d, c) \equiv n \pmod 2.$$
\end{lem}

\begin{proof}
Immediately from Theorem \ref{mainthm}.
\end{proof}

\begin{prop} \label{mainthm2prop}
The set $$\{(d / c, \; S(d, c) + S_4(d, c), \; S_4(d, c)) : d \; \mathrm{odd}, \; c > 0 \; \mathrm{even}, \; (d, c) = 1\}$$ is dense in $\R \times 2\Z \times (2\Z + 1)$.
\end{prop}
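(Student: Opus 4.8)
The plan is to imitate the argument of Subsection \ref{densityS}, but now to control the two discrete coordinates $S(d,c)+S_4(d,c)$ and $S_4(d,c)$ \emph{simultaneously} by appending a carefully designed tail to a $\G_\theta$-type c.f.e. that approximates the first coordinate. Fix a target $(x, 2m, 2\ell+1) \in \R \times 2\Z \times (2\Z+1)$ and $\varepsilon > 0$. Since $S(d+2c,c)=S(d,c)$ and $S_4(d+2c,c)=S_4(d,c)$, the set in question is invariant under $x \mapsto x+2$, so we may assume $x \in (-1,1)$, and replacing $x$ by a nearby irrational we may assume $x$ irrational. We work only with expansions $\frac{d}{c} = \llbracket 2c_1, \ldots, 2c_n \rrbracket$ (so $c_0 = 0$) of odd length $n$, written with $c > 0$; by the parity computation recorded before Lemma \ref{lengthlem}, $n$ odd forces $c$ even and $d$ odd, which is exactly the setting of Corollary \ref{maincor}, so both Hardy sums are defined. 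Writing $A = \sum_{k=1}^n \sgn(c_k)$ and $B = c_1 + c_3 + \cdots + c_n$, the expressions established before Lemma \ref{lengthlem} give $S(d,c)+S_4(d,c) = -2B$ and $S_4(d,c) = -2B + A$. It therefore suffices to realise the two targets $B = -m$ and $A = 2\ell+1-2m$ (the latter odd, in agreement with $A \equiv n \equiv 1 \pmod 2$), while keeping $\frac{d}{c}$ within $\varepsilon$ of $x$. Note that $S+S_4 = -2B$ is automatically even and $S_4 = -2B+A$ automatically odd, so the codomain $\R \times 2\Z \times (2\Z+1)$ is respected without extra work.

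First I would apply Proposition \ref{apprprop} to obtain an initial expansion $\llbracket 2c_1, \ldots, 2c_{n_0} \rrbracket$ with $n_0$ odd, lying within $\frac{\varepsilon}{2}$ of $x$, and with $|q_{n_0}|$ as large as I please (by running the algorithm far enough). The feature I will use repeatedly is that appending nonzero partial quotients barely moves the value: since all $c_k$ are nonzero, Lemma \ref{lemconv} gives $|q_{k-1}| < |q_k|$, and unimodularity of $\SmallMatrix{p_k}{p_{k-1}}{q_k}{q_{k-1}}$ gives $\left| \frac{p_k}{q_k} - \frac{p_{k-1}}{q_{k-1}} \right| = \frac{1}{|q_{k-1}q_k|} < \frac{1}{|q_{n_0}|^2}$ for every $k > n_0$, \emph{irrespective of the magnitudes} of the appended terms. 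Hence appending any bounded number of nonzero terms changes the value by less than $\frac{\varepsilon}{2}$ as soon as $|q_{n_0}|$ is large. This decouples the approximation from the combinatorial bookkeeping of $A$ and $B$.

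The heart of the proof is to drive $(B,A)$ to their targets by appending an \emph{even} number of terms, so that the length stays odd. The difficulty, and the step I expect to be the main obstacle, is that a term appended at an even position contributes only its sign to $A$ (and nothing to $B$), whereas a term at an odd position contributes both its value to $B$ and its sign to $A$; since appended positions alternate in parity, the two adjustments are entangled. I would resolve this in two stages. In Stage 1, set $\Delta B = -m - B_0$ where $B_0 = c_1 + c_3 + \cdots + c_{n_0}$; if $\Delta B \neq 0$, append two terms, a large $c_{n_0+1}$ at the even position $n_0+1$ and $c_{n_0+2} = \Delta B$ at the odd position $n_0+2$, which sets $B$ to its target and leaves $A$ at a known odd value $A_1$ (if $\Delta B = 0$, skip this stage). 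In Stage 2, let $\delta = (2\ell+1-2m) - A_1$, which is even since $A_1$ is odd; append $2q$ further terms with $q$ even and $q \geq |\delta|$, placing at the $q$ odd positions canceling pairs $(+T,-T)$ with $T$ large — these contribute $0$ to both $B$ and $A$ — and at the $q$ even positions large terms whose signs are chosen so their sum equals $\delta$ (possible because $|\delta| \le q$ and $\delta \equiv q \equiv 0 \pmod 2$). This achieves $A = 2\ell+1-2m$ without disturbing $B$.

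Finally I would collect the estimates: the total number of appended terms is bounded in terms of $m$, $\ell$, $B_0$, $A_0$, so choosing $|q_{n_0}|$ large at the outset keeps the cumulative movement below $\frac{\varepsilon}{2}$, and together with the initial $\frac{\varepsilon}{2}$ this places the final $\frac{d}{c}$ within $\varepsilon$ of $x$. By construction $B = -m$ and $A = 2\ell+1-2m$, so $S(d,c)+S_4(d,c) = 2m$ and $S_4(d,c) = 2\ell+1$, completing the argument. The only genuinely delicate point is the entanglement of $A$ and $B$ at odd positions, which the canceling-pair device in Stage 2 removes; all parity constraints match the target set automatically.
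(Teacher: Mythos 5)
Your proposal is correct, but it takes a genuinely different route from the paper's proof. The paper leans on the density result for $S(d,c)$ already established in Section \ref{densityS}: it first picks $\llbracket 2c_1, \ldots, 2c_n \rrbracket$ near $x$ with $S(p_n,q_n) = m_1 - m_2$, and then appends exactly \emph{two} partial quotients, $-\sgn(M)2c_{n+1}$ at the even position and $2M$ at the odd position. Their sign contributions cancel, so $S$ is untouched, while the odd-position entry shifts $B = c_1 + c_3 + \cdots$ by exactly $M$, fixing $S + S_4$; the value of $S_4$ then lands on $m_2$ automatically. You instead work directly with the pair $(B, A) = (c_1 + c_3 + \cdots, \; \sum_k \sgn(c_k))$ and never invoke the $S$-density argument: Stage 1 fixes $B$ with a two-term append, Stage 2 repairs $A$ via canceling pairs at odd positions and sign-balanced entries at even positions. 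This makes your proof self-contained modulo Theorem \ref{mainthm}, Corollary \ref{maincor}, Proposition \ref{apprprop} and Lemma \ref{lemconv} (in effect it re-proves the $S$-density statement along the way), at the cost of appending on the order of $|A_1| + |\ell| + |m|$ terms rather than two; the paper's opposite-sign trick is precisely the lighter tool that resolves the ``entanglement'' you identify, and your Stage 2 is a heavier but equally valid substitute. One point should be tightened: your error bookkeeping (``number of appended terms times $|q_{n_0}|^{-2}$, with $|q_{n_0}|$ chosen large at the outset'') is mildly circular, since the number of Stage 2 terms depends on $A_1$, hence on $n_0$, hence on the initial expansion. The circularity is harmless once you observe that the $|q_k|$ are strictly increasing \emph{integers}, so $|q_k| \geq |q_{n_0}| + (k - n_0)$ and the total displacement from appending any number of terms telescopes:
\begin{equation*}
\sum_{k > n_0} \frac{1}{\vert q_{k-1} q_k \vert} \; \leq \; \sum_{j \geq 0} \left( \frac{1}{\vert q_{n_0} \vert + j} - \frac{1}{\vert q_{n_0} \vert + j + 1} \right) \; = \; \frac{1}{\vert q_{n_0} \vert},
\end{equation*}
which is below $\varepsilon/2$ as soon as $\vert q_{n_0} \vert > 2/\varepsilon$ --- a condition you can impose before knowing how many terms you will append.
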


\begin{proof}
Let $(x, m_1, m_2) \in \R \times 2\Z \times (2\Z + 1)$ and $\varepsilon > 0$. Again, we need to show that there are coprime integers $d, c$ with $c > 0$ even and $d$ odd such that $$\left\vert x - \frac{d}{c} \right\vert < \varepsilon, \; \; S(d, c) + S_4(d, c) = m_1 \; \; \mathrm{and} \; \; S_4(d, c) = m_2.$$ As before, we may assume w.l.o.g. that $x \in (-1, 1)$.

Let $\frac{p_n}{q_n} = \llbracket 2c_1, ..., 2c_n \rrbracket$ be such that $\left\vert x - \frac{p_n}{q_n} \right\vert < \frac{\varepsilon}{3}$ and $S(p_n, q_n) = m_1 - m_2$, which is possible by what we proved in section \ref{densityS}. Since $m_1 - m_2$ is odd, Lemma \ref{lengthlem} tells us that $n$ must be odd as well. 

Let $M \in \Z$ be such that $S(p_n, q_n) + S_4(p_n, q_n) = m_1 + 2M$. If $M = 0$, there is nothing to show, since $$S(p_n, q_n) + S_4(p_n, q_n) = m_1 = m_1 - m_2 + S_4(p_n, q_n).$$ Thus, suppose that $M \neq 0$. 

Pick $c_{n + 1} > 0$ big enough such that $$\left\vert \frac{p_n}{q_n} - \llbracket 2c_1, ..., 2c_n, - \sgn(M) 2c_{n + 1}  \rrbracket \right\vert < \frac{1}{2 \vert c_{n + 1} \vert \vert q_n \vert - \vert q_{n - 1} \vert} < \frac{\varepsilon}{3}$$
and let $\frac{p_{n + 1}}{q_{n + 1}} = \llbracket 2c_1, ..., 2c_n, - \sgn(M) 2c_{n + 1}  \rrbracket$. Setting $$\frac{p_{n + 2}}{q_{n + 2}} = \llbracket 2c_1, ..., 2c_n, - \sgn(M) 2c_{n + 1} , 2M \rrbracket,$$ we see by Lemma \ref{lemrel} that $$\left\vert \frac{p_{n + 1}}{q_{n + 1}} - \frac{p_{n + 2}}{q_{n + 2}} \right\vert < \frac{1}{\vert q_{n + 2} \vert} < \frac{1}{2 c_{n + 1} \vert q_{n + 1} \vert - \vert q_n \vert} < \frac{\varepsilon}{3}.$$

But hence $\vert x - \frac{p_{n + 2}}{q_{n + 2}} \vert < \varepsilon$ and $$S(p_{n + 2}, q_{n + 2}) + S_4(p_{n + 2}, q_{n + 2}) = m_1 + 2M - 2M = m_1$$
by Corollary \ref{maincor}.

Finally, note that $S(p_n, q_n) = S(p_{n + 2}, q_{n + 2})$, giving that $S_4(p_{n + 2}, q_{n + 2}) = m_2$. 
\end{proof}

To finish the proof that $\{(d / c, \; S_4(d, c)) : d \; \mathrm{odd}, c > 0, \; (d, c) = 1 \}$ is dense in $\R \times \Z$, we turn to the possibility of $c > 0$ being odd. A continued fraction expansion of $\G^0(2)$-type of the form $$\frac{d}{c} = [2a_1, 2a_2, ..., 2a_{n - 1}, a_n] = \llbracket -2a_1, 2a_2, ..., 2a_{n - 1}, -a_n\rrbracket$$
is of $\G_\theta$-type if and only if $a_n$ is even. Hence, assuming that $a_n$ is odd, we see that $c > 0$ ought to be odd. This distinction will be implicit in the proof of the next proposition.

\begin{prop} \label{meyerthmprop}
The set $\{ (d / c, \; S_4(d, c)) : d \; \mathrm{odd}, \; c > 0, \; (d, c) = 1 \}$ is dense in $\R \times \Z$. 
\end{prop}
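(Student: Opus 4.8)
The plan is to split according to the parity of $m$, using the elementary congruence $S_4(d,c) \equiv c - 1 \pmod 2$, which is immediate from (\ref{S4defi}) since that sum has $c-1$ summands, each equal to $\pm 1$. Thus $S_4(d,c)$ is odd precisely when $c$ is even and even precisely when $c$ is odd. Fix $(x,m) \in \R \times \Z$ and $\varepsilon > 0$; since $S_4(d+2c,c) = S_4(d,c)$ we may assume $x \in (-1,1)$, and since we only need density we may assume $x$ irrational. If $m$ is odd there is nothing further to do: Proposition \ref{mainthm2prop} produces, for every target in $\R \times 2\Z \times (2\Z+1)$, fractions with $c$ even realising any prescribed odd value of $S_4$ arbitrarily close to $x$, so projecting to the first and third coordinates already yields density of $\{(d/c, S_4(d,c)) : c \text{ even}\}$ in $\R \times (2\Z+1)$.

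So I would concentrate on even $m$, where the denominator must be odd, and work throughout with $\G^0(2)$-type expansions in the shape of Theorem \ref{mainthm}(2). Expanding the irrational $x$ by the algorithm of Proposition \ref{contfracg2} (extended to irrationals exactly as in Proposition \ref{apprprop}) yields an infinite expansion all of whose odd-indexed partial quotients satisfy $\vert a_k \vert > 1$ — the value $\pm 1$ occurs only at the terminating step of a rational. Truncating at an odd index $n$ therefore gives a convergent $\frac{p_n}{q_n} = [2a_0; a_1, 2a_2, \dots, a_n]$ of the exact form demanded by Theorem \ref{mainthm}(2), with $\frac{p_n}{q_n}$ within $\frac{\varepsilon}{2}$ of $x$ once $n$ is large. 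Write $m_0 = S_4(p_n, q_n)$, computed by (\ref{S4inthm}), and put $\Delta = m - m_0$.

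The correction step appends a single block, replacing the expansion by $[2a_0; a_1, \dots, a_n, 2a_{n+1}, a_{n+2}]$ with $a_{n+1}$ a large positive integer. As in section \ref{densityS}, since consecutive convergents satisfy $\vert \frac{p_k}{q_k} - \frac{p_{k-1}}{q_{k-1}} \vert = \frac{1}{\vert q_k q_{k-1} \vert}$ and $\vert q_{n+1} \vert \to \infty$ as $a_{n+1} \to \infty$, the new fraction stays within $\varepsilon$ of $x$, independently of $a_{n+2}$. By Theorem \ref{mainthm}(2) the appended block changes $S_4$ by $a_{n+2} + \sgn(a_{n+1}) - \sgn(a_{n+2})$, which (as $a_{n+1} > 0$) equals $a_{n+2}$ if $a_{n+2} > 0$ and $a_{n+2} + 2$ if $a_{n+2} < 0$; and the convergent recursion $q_k = b_k q_{k-1} + q_{k-2}$ (with $b_{n+1} = 2a_{n+1}$, $b_{n+2} = a_{n+2}$) gives $q_{n+2} \equiv (a_1 + a_3 + \cdots + a_n) + a_{n+2} \pmod 2$, so the new denominator is odd exactly when $a_{n+2} \equiv 1 - (a_1 + \cdots + a_n) \pmod 2$. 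The key point is that this forced parity of $a_{n+2}$ coincides with the parity of $\Delta$: indeed $\Delta = m - m_0 \equiv m - (q_n - 1) \equiv 1 - (a_1 + \cdots + a_n) \pmod 2$ by the opening congruence and by $q_n \equiv a_1 + \cdots + a_n \pmod 2$. Choosing $a_{n+2} = \Delta$ when $\Delta > 0$ and $a_{n+2} = \Delta - 2$ when $\Delta < 0$ (and omitting the block when $\Delta = 0$) then yields a fraction with odd denominator, within $\varepsilon$ of $x$, and with $S_4 = m$, while the former last quotient $a_n$ becomes an interior odd-indexed term already satisfying $\vert a_n \vert > 1$, so the form of Theorem \ref{mainthm}(2) is preserved.

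I expect the genuine content to be exactly this parity bookkeeping: one must check that demanding an odd final denominator, the resulting forced parity of the realisable shift of $S_4$, and the required $\Delta$ are mutually consistent — a consistency resting on $S_4 \equiv c - 1 \pmod 2$ together with $q_n \equiv a_1 + a_3 + \cdots + a_n \pmod 2$. The approximation estimates are routine, following section \ref{densityS} essentially verbatim; the only other thing to verify is that the appended block keeps every interior odd-indexed partial quotient of absolute value greater than $1$, which holds because only the last quotient is unconstrained and the block merely demotes the already-large $a_n$ to an interior position.
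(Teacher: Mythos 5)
Your proposal is correct, but for even $m$ it takes a genuinely different route from the paper's. The paper reduces the even case to the odd one: it invokes Proposition \ref{mainthm2prop} to find $\llbracket 2c_1, \ldots, 2c_n \rrbracket$ near $x$ with $S_4(p_n,q_n)=m+1$ (odd value, even denominator), then appends the single partial quotient $-1$ to the $\G_\theta$-type expansion --- equivalently a final quotient $1$ in the $\G^0(2)$-type one --- which simultaneously flips the denominator to odd and lowers $S_4$ by exactly $1$; this keeps every approximation estimate inside the $\llbracket \cdot \rrbracket$-machinery of Lemmas \ref{lemrel} and \ref{lemconv}. You instead work directly with $\G^0(2)$-type expansions: truncate the infinite expansion of an irrational $x$, measure the defect $\Delta = m - S_4(p_n,q_n)$, and append a two-quotient block $2a_{n+1}, a_{n+2}$ realizing an arbitrary shift of the correct parity. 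Your bookkeeping checks out: the shift is $a_{n+2} + 1 - \sgn(a_{n+2})$ when $a_{n+1}>0$, the congruences $S_4(d,c) \equiv c-1 \pmod 2$ (a nice elementary substitute for the paper's Lemma \ref{lengthlem}) and $q_n \equiv a_1 + a_3 + \cdots + a_n \pmod 2$ make the parity forced on $a_{n+2}$ by the odd-denominator requirement coincide with the parity of $\Delta$, and when $\Delta = 0$ the denominator $q_n$ is automatically odd. What your route buys is that the even-$m$ case no longer leans on Proposition \ref{mainthm2prop} at all; what it costs is a dependency you should make explicit rather than cite as verbatim: you need analogues of Lemma \ref{lemrel}, Lemma \ref{lemconv}, and Proposition \ref{apprprop} for \emph{positive}-type expansions with mixed-sign quotients, whereas the paper proves these only for $\llbracket \cdot \rrbracket$-expansions, and its conversion of $\G^0(2)$-type into $\G_\theta$-type expansions is available only when the denominator is even --- precisely what you cannot assume here. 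The extension is routine (every quotient of the infinite expansion has modulus at least $2$, so the recursion $q_k = b_k q_{k-1} + q_{k-2}$ yields $\vert q_k \vert \geq 2\vert q_{k-1}\vert - \vert q_{k-2}\vert > \vert q_{k-1}\vert$ by the same induction, and the determinant identity gives the consecutive-convergent estimate), but it is an additional lemma, not a citation.
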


\begin{proof}
Let $(x, m) \in (-1, 1) \times \Z$ and $\varepsilon > 0$.  We need to find coprime integers $d, c$ with $c > 0$ and $d$ odd such that \begin{equation} \label{denscond}
    \left\vert \frac{d}{c} - x \right\vert < \varepsilon \; \; \mathrm{and} \; \; S_4(d, c) = m.
\end{equation}

If $m$ is odd, there is a $\frac{d}{c}$ satisfying the condition (\ref{denscond}) by Proposition \ref{mainthm2prop}. Thus, suppose that $m$ is even.

Again by Proposition \ref{mainthm2prop}, pick $\frac{p_n}{q_n} = \llbracket 2c_1, ..., 2c_n \rrbracket$ with $n$ odd such that $S_4(p_n, q_n) = m + 1$ and $\vert x - \frac{p_n}{q_n} \vert < \frac{\varepsilon}{3}$. 

Choose $c_{n + 1} > 0$ big enough such that $$\left\vert \frac{p_n}{q_n} - \llbracket 2c_1, ..., 2c_n, 2c_{n + 1} \rrbracket \right\vert < \frac{1}{2 c_{n + 1} \vert q_n \vert - \vert q_{n - 1} \vert} < \frac{\varepsilon}{3}$$
and let $\frac{p_{n + 1}}{q_{n + 1}} = \llbracket 2c_1, ..., 2c_n, 2c_{n + 1} \rrbracket$.

Set $$\frac{p_{n + 2}}{q_{n + 2}} = \llbracket 2c_1, ..., 2c_n, 2c_{n + 1}, -1 \rrbracket = [-2c_1, 2c_2, ..., -2c_n, 2c_{n + 1}, 1].$$

We then have $\vert x - \frac{p_{n + 2}}{q_{n + 2}} \vert < \varepsilon$, since $$\left\vert \frac{p_{n + 1}}{q_{n + 1}} - \frac{p_{n + 2}}{q_{n + 2}} \right\vert < \frac{1}{\vert q_{n + 1} \vert} < \frac{1}{2 c_{n + 1} \vert q_n \vert - \vert q_{n - 1} \vert} < \frac{\varepsilon}{3}$$
by Lemma \ref{lemrel}.
Also, $$S_4(p_{n + 2}, q_{n + 2}) = m + 1 - 1 + 1 - 1 = m$$
by Theorem \ref{mainthm}.
\end{proof}

\newpage

\section{Numerical Values} \label{numericssec}

\begin{tiny}
We include a numerical table of the values of $S(d, c)$ and $S_4(d, c)$ for all $1 \leq c \leq 10$, so that the reader can check the formulas in Theorem \ref{mainthm} for herself. Since $S(d + 2c, c) = S(d, c)$ and $S(-d, c) = - S(d, c)$, as well as $S_4(d + 2c, c) = S_4(d, c)$ and $S_4(-d, c) = - S_4(d, c)$, it suffices to give the values for $1 \leq d < c$ only. 

If a (finite) $\G_\theta$- or $\G^0(2)$-expansion does not exist, we put the symbol "$\times$". If a number repeats itself several times, we write $\overset{n}{...}$ to indicate that the number before will repeat itself $n$-times where we indicated "$...$". E.g., $[2, 2, 2, 2, 4] = [2, \overset{3}{...}, 4]$.
\end{tiny}

\begin{center}
\begin{tabular}{ |c||c|c||c|c||  }
 \hline
 $(d, c)$ & $\G_\theta$-expansion & $S(d, c)$ & $\G^0(2)$-expansion & $S_4(d, c)$ \\
 \hline
 $(1, 2)$   & $\llbracket -2 \rrbracket$ & 1 & $[2]$ & 1 \\
 $(1, 3)$ & $\times$ & $\times$ & $[3]$ & 2 \\
 $(2, 3)$ & $\llbracket - 2, - 2 \rrbracket$ & 2 & $\times$ & $\times$ \\
 $(1, 4)$ & $\llbracket -4 \rrbracket$ & 1 & $[4]$ & $3$ \\
 $(3, 4)$ & $\llbracket -2, -2, -2 \rrbracket$ & $3$ & $[2, -2, 2]$ & $1$ \\
 $(1, 5)$ & $\times$ & $\times$ & $[5]$ & $4$ \\
 $(2, 5)$ & $\llbracket -2, 2 \rrbracket$ & $0$ & $\times$ & $\times$ \\
 $(3, 5)$ & $\times$ & $\times$ & $[2, -2, -1]$ & $0$ \\
 $(4, 5)$ & $\llbracket -2, -2, -2, -2 \rrbracket$ & $4$ & $\times$ & $\times$ \\
 $(1, 6)$ & $\llbracket -6 \rrbracket$ & $1$ & $[6]$ & $5$ \\
 $(5, 6)$ & $\llbracket -2, -2, -2, -2, -2 \rrbracket$ & $5$ & $[2, -2, 2, -2, 2]$ & $1$ \\
 $(1, 7)$ & $\times$ & $\times$ & $[7]$ & $6$ \\
 $(2, 7)$ & $\llbracket 0, -4, -2 \rrbracket$ & $2$ & $\times$ & $\times$ \\
 $(3, 7)$ & $\times$ & $\times$ & $[2, 2, 1]$ & $2$ \\
 $(4, 7)$ & $\llbracket -2, -4 \rrbracket$ & $2$ & $\times$ & $\times$ \\
 $(5, 7)$ & $\times$ & $\times$ & $[2, -2, 3]$ & $2$ \\
 $(6, 7)$ & $\llbracket -2, \overset{4}{...}, -2 \rrbracket$ & $6$ & $\times$ & $\times$ \\
 $(1, 8)$ & $\llbracket -8 \rrbracket$ & $1$ & $[8]$ & $7$ \\
 $(3, 8)$ & $\llbracket -2, 2, 2 \rrbracket$ & $-1$ & $[3, -2, -1]$ & $1$ \\
 $(5, 8)$ & $\llbracket -2, -2, 2 \rrbracket$ & $1$ & $[2, -2, -2]$ & $-1$ \\
 $(7, 8)$ & $\llbracket -2, \overset{5}{...}, -2 \rrbracket$ & $7$ & $[2, -2, 2, -2, 2, -2, 2]$ & $1$ \\
 $(1, 9)$ & $\times$ & $\times$ & $[9]$ & $8$ \\
 $(2, 9)$ & $\llbracket -4, 2 \rrbracket $ & $0$ & $\times$ & $\times$ \\
 $(4, 9)$ & $\llbracket -2, 4 \rrbracket$ & $0$ & $\times$ & $\times$ \\
 $(5, 9)$ & $\times$ & $\times$ & $[2, -4, -1]$ & $0$ \\
 $(7, 9)$ & $\times$ & $\times$ & $[2, -2, 2, -2, -1]$ & $0$ \\
 $(8, 9)$ & $\llbracket -2, \overset{6}{...}, -2 \rrbracket$ & $8$ & $\times$ & $\times$ \\
 $(1, 10)$ & $\llbracket -10 \rrbracket$ & $1$ & $[10]$ & $9$ \\
 $(3, 10)$ & $\llbracket -4, -2, -2 \rrbracket$ & $3$ & $[3, 2, 1]$ & $3$ \\
 $(7, 10)$ & $\llbracket -2, -2, -4 \rrbracket$ & $3$ & $[2, -2, 4]$ & $3$ \\
 $(9, 10)$ & $\llbracket -2, \overset{7}{...} , -2 \rrbracket$ & $9$ & $[2, -2, 2, -2, 2, -2, 2, -2, 2]$ & $1$\\
 \hline
\end{tabular}    
\end{center}

\newpage

\appendix

\section{Proof of Proposition \ref{eiscontfrac}} \label{appproof}

Let $n \geq 1$ be an odd integer and $a_0, ..., a_n$ be integers. Let $A_k = \SmallMatrix{a_k}{(-1)^{k + 1}}{(-1)^k}{0}$ for $k = 0, ..., n$ and write $\SmallMatrix{*}{*}{c_k}{d_k} = A_0 A_1 \cdots A_k$. Moreover, set $A = \SmallMatrix{a}{b}{c}{d} = A_0 A_1 \cdots A_n$.

We want to prove that \begin{equation*}
    \displaystyle\int_z^{A.z} E_2(w) dw = \frac{6}{\pi i} \log \left( \frac{cz + d}{\sgn(c) i} \right)  + \sum_{k = 0}^n (-1)^k a_k + 3 \sum_{k = 1}^n \sgn(c_k d_k).
\end{equation*}

We begin by rewriting the integral as \begin{equation} \label{eqinproof}
    \int_z^{A.z} E_2(w) dw = \frac{6}{\pi i} \int_i^z \frac{c}{cw + d} dw + \int_i^{A.i} E_2(w) dw
\end{equation}
and note that $A_k.i = \frac{a_k i - (-1)^k }{(-1)^k i} = i + (-1)^k a_k$. In particular, it follows from the Fourier expansion of $E_2$ in (\ref{fouriere2}) that $$\displaystyle\int_i^{A_k.i} E_2(w) \; dw = \displaystyle\int_i^{i + (-1)^k a_k} E_2(w) \; dw = (-1)^k a_k.$$

Now we split the line integral from $z$ to $A.z$ into an integral over $n + 1$ paths (this is possible as $E_2(w)$ is a holomorphic function): \begin{align*}
    \int_i^{A.i} E_2 (w) \; dw &= \sum_{k = 0}^{n} \int_{A_0 \cdots A_{k - 1}.i}^{A_0 \cdots A_k.i} E_2 (w) \; dw \\
    &= \sum_{k = 0}^{n} \int_{i}^{A_{k}.i} E_2(w) \vert_2 (A_0 \cdots A_{k - 1}) \; dw \\
    &= \sum_{k = 0}^{n} \int_{i}^{A_{k}.i} E_2 (w) \; dw  + \frac{6}{\pi i} \sum_{k = 1}^n  \int_{i}^{A_{k}.i} \frac{c_{k - 1}}{c_{k - 1}w + d_{k - 1}} \; dw \\
    &= \sum_{k = 0}^n (-1)^k a_k  + \frac{6}{\pi i} \sum_{k = 1}^n  \int_{i}^{A_{k}.i} \frac{c_{k - 1}}{c_{k - 1}w + d_{k - 1}} \; dw.
\end{align*}

As a primitive of $\frac{c_{k - 1}}{c_{k - 1}w + d_{k - 1}} = \frac{-c_{k - 1}}{- c_{k - 1}w - d_{k - 1}}$, we choose $\log \left( \frac{c_{k - 1} w + d_{k - 1}}{\sgn(c_{k - 1}) i} \right)$. Since $$\SmallMatrix{*}{*}{c_{k - 1}}{d_{k - 1}} A_k = \SmallMatrix{*}{*}{c_{k - 1}a_k + (-1)^k d_{k - 1}}{(-1)^{k + 1} c_{k - 1}} = \SmallMatrix{*}{*}{c_k}{d_k},$$ we have $c_{k - 1} A_k.i + d_{k - 1} = \frac{c_k i + d_k}{(-1)^k i}$. Thus, \begin{equation} \label{logeqinproof}
    \sum_{k = 1}^n \int_{i}^{A_{k}.i} \frac{c_{k - 1}}{c_{k - 1}w + d_{k - 1}} dw = \sum_{k = 1}^n \left( \log \left( \frac{c_{k} i + d_k}{\sgn(d_k)} \right) - \log \left( \frac{c_{k - 1} i + d_{k - 1}}{\sgn(c_{k - 1}) i} \right) \right).
\end{equation}

We may reorder the sum on the right hand side of (\ref{logeqinproof}) to obtain  \begin{align*}
    &\frac{6}{\pi i } \log \left( \frac{c_n i + d_n}{\sgn(d_n)} \right) - \frac{6}{\pi i} \log \left( \frac{i}{i} \right) + \frac{6}{\pi i} \sum_{k = 1}^{n - 1} \left( \log \left( \frac{c_k i + d_k}{\sgn(c_k) i} \right) - \log \left( \frac{c_k i + d_k}{\sgn(d_k)} \right) \right) \\
    &= \frac{6}{\pi i } \log \left( \frac{c i + d}{\sgn(d)} \right) + 3 \sum_{k = 1}^{n - 1} \sgn(c_k d_k).
\end{align*}

Hence, with (\ref{eqinproof}), we arrive at \begin{align*}
    \int_z^{A.z} E_2(w) dw &= \frac{6}{\pi i} \log\left( \frac{cz + d}{\sgn(c) i} \right) - \frac{6}{\pi i} \log\left( \frac{ci + d}{\sgn(c) i} \right) + \sum_{k = 0}^n (-1)^k a_k \\
    &+ \frac{6}{\pi i} \log\left( \frac{ci + d}{\sgn(d)}\right) + 3 \sum_{k = 1}^{n - 1} \sgn(c_k d_k) \\
    &= \frac{6}{\pi i} \log\left( \frac{cz + d}{\sgn(c) i} \right) +  \sum_{k = 0}^n (-1)^k a_k + 3 \sum_{k = 1}^{n} \sgn(c_k d_k).
\end{align*}

\newpage

\end{document}